\documentclass[12pt]{amsart}
\usepackage{amsmath}
\usepackage{amsthm}
\usepackage{amssymb}
\usepackage{amsfonts,mathrsfs}
\usepackage{stmaryrd}
\usepackage{amsxtra}  
\usepackage{epsfig}
\usepackage{verbatim}

\theoremstyle{plain}
\newtheorem{theorem}[equation]{Theorem}
\newtheorem{proposition}[equation]{Proposition}
\newtheorem{lemma}[equation]{Lemma}
\newtheorem{corollary}[equation]{Corollary}
\newtheorem{definition}[equation]{Definition}
\theoremstyle{remark}

\numberwithin{equation}{section}

\newcommand{\sjump}{\hskip .2 cm}

\newcommand{\dbarstar}{\bar{\partial}^{\star}}

\newcommand{\psum}{\sideset{}{^{\prime}}{\sum}}

\newcommand{\cn}{{\mathcal N}}
\newcommand{\dbar}{\bar \partial}

\begin{document}

\title{A smoothing property of the Bergman projection}
\author{A.-K. Herbig \& J. D. McNeal}
\subjclass[2000]{32A25, 32W05}
\thanks{Research of the first author was supported by FWF grants P19147 and AY0037721}
\thanks{Research  of the second author was partially supported by an NSF grant}
\address{Department of Mathematics, \newline University of Vienna, Vienna, Austria}
\email{anne-katrin.herbig@univie.ac.at}
\address{Department of Mathematics, \newline Ohio State University, Columbus, Ohio, USA}
\email{mcneal@math.ohio-state.edu}
\date{}
\begin{abstract} 
 Let $B$ be the Bergman projection associated to a domain $\Omega$  on which the $\bar\partial$-Neumann operator is compact.
We show that arbitrary $L^2$ derivatives of $Bf$ are controlled by derivatives of  $f$ taken in a single, distinguished direction. As a consequence, functions not contained in $C^{\infty}(\overline{\Omega})$ that are mapped by  $B$ to $C^{\infty}(\overline{\Omega})$ are explicitly described.
\end{abstract}
\maketitle

\section{Introduction}

The purpose of this paper is to prove a refined global regularity property of the Bergman operator associated to a pseudoconvex domain in $\mathbb{C}^{n}$ on which the $\dbar$-Neumann operator is compact.

Let $\Omega\subset\mathbb{C}^{n}$ be an open connected set with smooth boundary, $b\Omega$. 
If $\mathcal{O}(\Omega)$ and $L^{2}(\Omega)$ denote the spaces of holomorphic functions and Lebesgue square-integrable classes on $\Omega$, elementary estimates show that the Bergman space
$A^{2}(\Omega) =\mathcal{O}(\Omega)\cap L^{2}(\Omega)$ is closed in $L^{2}(\Omega)$. Consequently,  the orthogonal projection $B: L^{2}(\Omega)\longrightarrow A^{2}(\Omega)$ is a 
well-defined bounded operator on $L^{2}(\Omega)$. The operator $B$ is the Bergman projection. 

A problem of significant interest is to understand the behavior of $B$ on function spaces other than
$L^{2}(\Omega)$. Since $B$ generally lacks a simple closed-form expression as an integral operator, there cannot be a universal resolution to this problem. However, special cases have been widely studied and there are numerous results connected to this problem, for different classes of domains $\Omega$ and for different function spaces $F$; see \cite{Fefferman74, PhoSte77, AheSch79, McNeal89, NagRosSteWai89, McNeal91, McNeal94, McNSte94, Koenig02}  and their references for the principal cases known. The function spaces considered in these works are naturally graduated, e.g., Lipschitz spaces, Sobolev spaces, and the conclusion is that $B$ is a ``$0$th order operator'' on the graded family of Banach spaces considered, i.e., $B$ maps $F^{s}$ to $F^{s}$, preserving but not improving the scale $s$ within the family $\{F^{t}\}$. This is a sharp isotropic conclusion since $Bh=h$ for $h\in A^{2}(\Omega)$ and $A^{2}(\Omega)$ contains elements sharply in the classical spaces $F^{s}$.

There is however a basic non-isotropy inherent to the analysis of $\mathcal{O}(\Omega)$ which can lead to sharper boundedness results. This non-isotropy
arises from the partial complex  structure of $T(b\Omega)$, the tangent space to $b\Omega$, and goes back to the beginning of complex analysis in several variables (e.g., pseudoconvexity, Hartogs phenomena). The observation that this non-isotropy can lead to sharper boundedness results for $B$ and related operators, however, originates in the work of Stein \cite{Stein70,Stein72}. Subsequently, this fundamental observation has been developed, extended, and applied to a variety of problems in several complex variables by many mathematicians; see \cite{FolSte74,RotSte76} and the references to chapters  XII and XIII in \cite{Stein93}.

In this paper, we consider a simple, global aspect of the analysis of $B$ that seems to have been overlooked. Suppose that $\Omega\subset\mathbb{C}^{n}$ is a smoothly bounded domain on which the $\dbar$-Neumann operator, $N$, is compact. Under this hypothesis, Kohn and Nirenberg (Theorem 2 in \cite{KohNir65}) showed that $N$ is globally regular and that $B$ is globally regular as well, i.e., $B$ maps $C^{\infty}(\overline{\Omega})$ to itself.
More quantitatively, Kohn showed in \S 2 of \cite{Kohn84} that for any $k\in\mathbb{N}$ there exists a constant $C_{k}>0$ so that
\begin{align}\label{E:KohnBestimate}
  \|Bf\|_{k}\leq C_{k}\|f\|_{k}\qquad\sjump\forall\sjump f\in H^{k}(\Omega),
\end{align}
where $\|.\|_{k}$ denotes that $L^{2}$ Sobolev norm of order $k$ on $\Omega$. The goal of our paper is to improve \eqref{E:KohnBestimate} by showing that the full $H^{k}$ norm on the right hand side can be replaced by a term involving derivatives of $f$ in only one special direction (up to order $k$). Specifically, suppose $\Omega=\{z\in\mathbb{C}^{n}:r(z)<0\}$ for a $C^{\infty}$ function $r$ such that $\nabla r\neq 0$ on $\{z\in\mathbb{C}^{n}:r(z)=0\}$ and define the vector field
\begin{align}\label{D:badtangential}
X=\sum_{j=1}^{n}r_{\bar{z}_{j}}\frac{\partial}{\partial z_{j}}-r_{z_{j}}\frac{\partial}{\partial\bar{z}_{j}}.
\end{align}
Let $X^{\ell}f=X(X(\dots(Xf)\dots)$ denote $\ell$-fold differentiation of $f$ by the vector field $X$.
The main result of this paper is 
\begin{theorem}\label{T:MainTheorem}
  Let $\Omega\subset\mathbb{C}^{n}$ be a smoothly bounded, pseudoconvex domain on which the $\dbar$-Neumann operator is compact, $r$ a smooth defining function for $\Omega$, and $X$ the vector field in
\eqref{D:badtangential}.

Then for any $k\in\mathbb{N}$ there exists a constant $C_{k}>0$ such that
\begin{align}\label{E:Bestimate}
  \|Bf\|_{k}\leq C_{k}\sum_{\ell=0}^{k}\|X^{\ell}f\|
\end{align}
holds for all $f\in L^{2}(\Omega)$ such that $X^{\ell}f$ (in the distributional sense) belongs to $L^{2}(\Omega)$, $\ell\in\{1,\dots,k\}$.
\end{theorem}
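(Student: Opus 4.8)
\emph{Proof proposal.} The plan is to deduce \eqref{E:Bestimate} from two separate estimates. The first is a general fact about holomorphic functions: for $u\in A^{2}(\Omega)$ smooth enough that the norms below make sense,
\begin{align}\label{E:propA}
  \|u\|_{k}\leq C_{k}\sum_{\ell=0}^{k}\|X^{\ell}u\|.
\end{align}
The second is a mapping property of $B$ along $X$,
\begin{align}\label{E:propB}
  \|X^{\ell}Bf\|\leq C\sum_{m=0}^{\ell}\|X^{m}f\|\qquad\sjump(0\leq\ell\leq k).
\end{align}
Granting these, one applies \eqref{E:propA} to $u=Bf$ and then \eqref{E:propB} to each summand; since $N$ is compact, $Bf\in C^{\infty}(\overline{\Omega})$ whenever $f\in C^{\infty}(\overline{\Omega})$ (Kohn--Nirenberg), so the chain of inequalities is legitimate and gives \eqref{E:Bestimate} for smooth $f$. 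The general case follows by approximation: $C^{\infty}(\overline{\Omega})$ is dense, in the graph norm $\sum_{\ell}\|X^{\ell}\cdot\|$, in $\{f\in L^{2}(\Omega):X^{\ell}f\in L^{2}(\Omega),\ \ell\leq k\}$, because $X$ is tangent to $b\Omega$ (as $Xr=0$), so a Friedrichs mollification can be chosen to commute with $X$ modulo $L^{2}$-bounded operators; then $Bf_{j}\to Bf$ in $L^{2}$ while $\{Bf_{j}\}$ stays bounded in $H^{k}$, and weak compactness together with lower semicontinuity of the norm yields $Bf\in H^{k}$ with the stated bound.

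For \eqref{E:propA}: if $u$ is holomorphic, the only nonvanishing derivatives are the pure holomorphic ones $\partial_{z}^{\alpha}u$, so $\|u\|_{k}^{2}$ is comparable to $\sum_{|\alpha|\leq k}\|\partial_{z}^{\alpha}u\|^{2}$; in the interior these are controlled by $\|u\|$ via the Cauchy estimates. Near $b\Omega$, expand each $\partial_{z_{j}}$ in a smooth local frame $L_{1},\dots,L_{n-1},L_{n}$ for $(1,0)$ vector fields, with $L_{1},\dots,L_{n-1}$ complex tangential and $L_{n}=\sum_{j}r_{\bar{z}_{j}}\partial_{z_{j}}$. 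Since $\dbar u=0$ forces $\bar L_{j}u=0$ for all $j$, and \eqref{D:badtangential} gives $L_{n}u=Xu$, every first-order derivative of $u$ is, near $b\Omega$, a smooth combination of $L_{1}u,\dots,L_{n-1}u$ and $Xu$. The complex-tangential ones are absorbed by integration by parts: $L_{j}$ being tangent to $b\Omega$, one has $L_{j}^{\ast}=-\bar L_{j}$ modulo order zero and with no boundary term, so $\|L_{j}u\|^{2}=-\langle u,[\bar L_{j},L_{j}]u\rangle+O(\|u\|\,\|L_{j}u\|)$, and $[\bar L_{j},L_{j}]u$, written in the frame, involves only $L_{i}u$ $(i<n)$ and $Xu$; summing over $j$ and absorbing gives $\sum_{j<n}\|L_{j}u\|\lesssim\|Xu\|+\|u\|$. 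Running the same scheme on higher-order words --- reordering costs the brackets $[L_{i},L_{j}]=\sum_{m}\nu_{ij}^{m}L_{m}$, which are lower order, and $L_{n}^{q}u=X^{q}u+(\text{order}\leq q-1)u$ --- one obtains \eqref{E:propA} by induction on $k$. (Note that this step uses neither pseudoconvexity nor compactness of $N$.)

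For \eqref{E:propB}: write $\|X^{\ell}Bf\|\leq\|BX^{\ell}f\|+\|[X^{\ell},B]f\|\leq\|X^{\ell}f\|+\|[X^{\ell},B]f\|$, and expand $[X^{\ell},B]=\sum_{i=0}^{\ell-1}X^{\ell-1-i}[X,B]X^{i}$. It therefore suffices to show that $[X,B]$ is of order zero on the scale $\mathcal{H}^{j}_{X}:=\{g:X^{m}g\in L^{2}(\Omega),\ m\leq j\}$, i.e. $\|X^{j}[X,B]g\|\lesssim\sum_{m\leq j}\|X^{m}g\|+\|g\|$. Using $B=I-\dbarstar N\dbar$ and the fact that $\dbarstar N\dbar$ is bounded on $L^{2}$ (it equals $I-B$), this reduces to controlling the commutators $[X,\dbar]$, $[X,\dbarstar]$, and above all $[X,N]$; the needed statement is the a priori estimate $\sum_{\ell\leq k}\|X^{\ell}Nv\|\lesssim\sum_{\ell\leq k}\|X^{\ell}v\|$ (and likewise for $\dbarstar N$). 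The plan is to obtain this by running the Kohn--Nirenberg elliptic-regularization proof of the global regularity of $N$, but commuting the \emph{specific} field $X$ through $\Box$ rather than arbitrary tangential derivatives: the compactness estimate $\|v\|^{2}\leq\varepsilon(\|\dbar v\|^{2}+\|\dbarstar v\|^{2})+C_{\varepsilon}\|v\|_{-1}^{2}$ is used to absorb the error terms. This last point is the main obstacle: a crude bound for $[X,N]$ loses two derivatives, so one must exploit that $X=L_{n}-\bar L_{n}$ is the distinguished tangential field attached to $r$ --- its brackets with $\dbar$ and $\dbarstar$ lie, modulo order zero, along the complex-tangential and real-normal directions, which is exactly what makes the error terms amenable to the compactness estimate --- to close the induction. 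By comparison, \eqref{E:propA} and the density reduction are routine.
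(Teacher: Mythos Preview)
Your two-step plan --- reduce to $X$-derivatives of $Bf$ via \eqref{E:propA}, then pass $X$ through $B$ via \eqref{E:propB} --- is exactly the paper's architecture, and your treatment of \eqref{E:propA} and the density reduction matches Corollary~\ref{C:basicSobolev1estimate}(ii) and Lemma~\ref{L:apriori}. The gap is in \eqref{E:propB}.

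Your route to \eqref{E:propB} through the commutator $[X,B]$, expanded via Kohn's formula into $[X,\dbar]$, $[X,\dbarstar]$, and $[X,N]$, and then into an a priori estimate $\sum_{\ell}\|X^{\ell}Nv\|\lesssim\sum_{\ell}\|X^{\ell}v\|$ for \emph{general} $(0,1)$-forms $v$, is not what the paper proves, and it is not clear the estimate holds at that generality. The paper works only with the specific form $\varphi=N\dbar f$, which satisfies $\dbar\varphi=0$ and $\dbarstar\varphi=f-Bf$; both identities are used essentially when $Q(T^{k}\varphi,T^{k}\varphi)$ is estimated (Lemmas~\ref{L:basicphiestimates} and \ref{L:mphiestimates}). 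For a generic $v$, neither $\dbar Nv$ nor $\dbarstar Nv$ simplifies, and the commutator errors would reintroduce full first-order derivatives of $Nv$ that cannot be closed with $X$-norms of $v$ alone. Your closing remark that ``a crude bound for $[X,N]$ loses two derivatives'' is the right diagnosis, but the cure you sketch (bracket structure of $X$ with $\dbar,\dbarstar$) is not by itself enough to regain them.

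The paper supplies three ingredients your proposal is missing. First, it never estimates $[X,B]$ or $[X,N]$ directly; instead it expands \emph{only one side} of $(X^{k}Bf,X^{k}Bf)$ via $Bf=f-\dbarstar\varphi$, which converts the problem to bounding $\|T^{k}\varphi\|$ rather than a full operator commutator (Lemmas~\ref{L:XBfbyTphi} and \ref{L:Tkphireduction}). Second, since $X$ does not preserve $\text{Dom}(\dbarstar)$, one must replace $X$ by the zero-order perturbation $T$ of Lemma~\ref{L:T} before invoking the compactness estimate \eqref{E:compact}; otherwise $\dbarstar T^{k}\varphi$ is not even defined and the integration-by-parts steps acquire uncontrolled boundary terms. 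Third, the induction on $k$ requires the refined Sobolev-$1$ bound $\|T^{m}\varphi\|_{1}^{2}\lesssim\|T^{m+1}\varphi\|^{2}+\|f\|_{m,X}^{2}$ (Lemma~\ref{L:mphiestimates}(iii)), which comes from Proposition~\ref{P:basicLestimate} applied componentwise and is what allows the commutator errors $[\dbar,X^{j}]\varphi$, $[\vartheta,X^{j}]\varphi$ --- which are genuinely first order in $\varphi$ --- to be re-expressed in terms of $T^{\ell}\varphi$ and $\|f\|_{m,X}$. Your sketch of \eqref{E:propB} should be rewritten around $\varphi=N\dbar f$ and these three devices; the commutator-of-operators framing can then be dropped.
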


The improvement over \eqref{E:KohnBestimate} given by Theorem \ref{T:MainTheorem} can be illustrated already on the unit disc $D$ in $\mathbb{C}$, $D=\{z\in\mathbb{C}: |z|<1\}$. Let
\begin{align*}
  \tilde{f}(z)=(1-|z|^{2})^{-\frac{1}{4}}\cdot g(z)
\end{align*}
for a fixed $g\in C^{\infty}(\overline{D})$, which does not vanish identically on $bD$. Then $\tilde{f}\in L^{2}(D)$ and 
$X\tilde{f}=(1-|z|^{2})^{-\frac{1}{4}}\cdot Xg$, since $X$ is tangent to $bD$, is also in $L^{2}(D)$. However, $\tilde{f}\notin H^{1}(D)$ since
\begin{align*}
  \frac{\partial}{\partial r}\tilde{f} \sim(1-|z|^{2})^{-\frac{5}{4}}\cdot g\notin L^{2}(D),
\end{align*}
where $\frac{\partial}{\partial r}$ denotes the radial derivative on $D$. Theorem \ref{T:MainTheorem} implies that $B\tilde{f}$ belongs to $H^{1}(D)$, but this cannot be concluded from
\eqref{E:KohnBestimate}.

\medskip

The proof of Theorem \ref{T:MainTheorem} will occur in two basic steps. Both steps involve the non-isotropy of $\mathcal{O}(\Omega)$ mentioned above, but in different ways. The first step is to show
\begin{align}\label{E:reductionholomorph}
   \|h\|_{k}\leq C_{k}\sum_{\ell=0}^{k}\|X^{\ell} h\|,\qquad h\in\mathcal{O}(\Omega).
\end{align}
This is a well-known fact, see, e.g., \cite{Barrett86, Boas87}, 
that follows from the ellipticity of the Cauchy-Riemann equations and does not require hypotheses on $\Omega$ other than the smoothness of $b\Omega$ (which can also be weakened, \cite{Barrett86}). Inequality \eqref{E:reductionholomorph} reduces establishing \eqref{E:Bestimate} to bounding $\sum_{\ell=0}^{k}\|X^{\ell}Bf\|$ by the right hand side of \eqref{E:Bestimate}. We emphasize that no properties of the Bergman projection beyond the fact that $Bf\in\mathcal{O}(\Omega)$ are used in this step.

The second step is to see how $X$ commutes with $B$.\footnote{If $\Omega$ is a ball in $\mathbb{C}^{n}$, this step simplifies enormously; see the Appendix.} In this step, the orthogonality of $B$ and the hypothesis that the $\dbar$-Neumann operator $N$ is compact are used heavily. Commuting $X$ directly with the integral operator $B$ is intractable because of the lack of useful estimates on the Bergman kernel under the hypothesis of Theorem \ref{T:MainTheorem}. Instead, Kohn's formula (see \eqref{E:Kohnsformula}) is used to transfer the problem to estimating derivatives of a $(0,1)$-form $\varphi$ closely connected to $f$, $\varphi=N\dbar f$. This idea was also used in \cite{BoaStr91,Kohn99,HerMcN06}  for similar reasons. A key point is to use Kohn's formula to expand only one side of the inner product
$\|XBf\|^{2}=(XBf,XBf)$. The other essential point is to use Proposition \ref{P:basicLestimate} of Section \ref{S:bad} to re-express derivatives in terms of $X$; a careful integration by parts argument then shows
\begin{align*}
  \|XBf\|\leq C\left(\|T\varphi\|+\|f\|_{1,X} \right)
\end{align*}
where $T$ is closely related to $X$ (but preserves $\text{Dom}(\dbarstar)$) and $\|.\|_{1,X}$ is defined in Section \ref{S:Proof}.

The final argument is to show that $\|T\varphi\|$ is dominated by $\|f\|_{1,X}$. The crucial estimate used here is the compactness estimate \eqref{E:compact}, with Proposition \ref{P:basicLestimate} again used to re-express derivatives of $\varphi$ as $T$ is commuted past the $\dbar$ and $\dbarstar$ operators. The special relationship between $\varphi$ and $f$ (namely, $\dbar\varphi=0$ and $\dbarstar\varphi=f-Bf$) then allows the proof of Theorem \ref{T:MainTheorem} to conclude for the case $k=1$. A careful examination of the estimates leading to the case $k=1$ gives the result for higher derivatives.

The paper is organized as follows. In Section \ref{S:Preliminaries} notation is fixed and some elements of the $\bar\partial$-Neumann theory are recalled. In Section \ref{S:bad}, basic geometric apparatus associated to the boundary of a smoothly bounded domain is laid out and the needed inequalities involving derivatives in the ``bad tangential direction" to the boundary are derived. The proof of Theorem \ref{T:MainTheorem} is then given in Section \ref{S:Proof}. In Section \ref{S:Remarks}, the class of functions which is mapped by $B$ to $C^{\infty}(\overline{\Omega})$ is described and the dependence of \eqref{E:Bestimate} on the choice of the vector field $X$ is discussed. The section concludes with a description of the class of vector fields which may replace $X$ in Theorem \ref{T:MainTheorem}.

We would like to thank Harold Boas for several stimulating conversations at the Erwin Schr\"odinger Institute in the fall of 2009. This project started when he suggested that estimates obtained in \cite{HerMcN06} should lead to improvements similar to Theorem \ref{T:MainTheorem}, even for more general domains than we consider here.

\medskip

\section{Preliminaries}\label{S:Preliminaries}
Standard coordinates on $\mathbb{C}^{n}$ will be denoted $(z_1,\dots ,z_n)$, with $z_k=x_k+\sqrt{-1} y_k$ for $k=1,\dots ,n$. The usual Cauchy-Riemann vector fields are defined
$$\frac{\partial}{\partial z_k}=\frac 12\left(\frac{\partial}{\partial x_k}-\sqrt{-1}\frac{\partial}{\partial y_k}\right),\qquad
\frac{\partial}{\partial\bar z_k}=\frac 12\left(\frac{\partial}{\partial x_k}+\sqrt{-1}\frac{\partial}{\partial y_k}\right).$$
Differentiation of a smooth function will often be denoted with subscripts, e.g., $f_{z_j}=\frac{\partial f}{\partial z_j}$.
 
Domains $\Omega\subset\mathbb{C}^{n}$ with smooth boundary $b\Omega$ will be described using defining functions: a smooth, $\mathbb{R}$-valued function $r$ is a defining function for $\Omega$ if $\Omega=\{z\in\mathbb{C}^{n}:r(z)<0\}$ and $\nabla r\neq 0$ on $\{z\in\mathbb{C}^{n}: r(z)=0\}$. 
Note that $\sum_{j=1}^n|r_{z_j}|^2\neq 0$ on $b\Omega$ if $r$ defines $\Omega$.
If $\Omega\subset\mathbb{C}^n$ is smoothly bounded, and $r$ defines $\Omega$, then $\Omega$ is pseudoconvex if for all $z\in b\Omega$
\begin{align}\label{E:pseudoconvex}
 \sum_{j,k=1}^{n} r_{z_{j}\bar{z}_{k}}(z)\xi_{j}\bar{\xi}_{k}\geq 0
\qquad\sjump\forall\sjump \xi\in\mathbb{C}^{n}\;\text{with}\;\sum_{j=1}^{n} r_{z_{j}}(z)\xi_{j}=0
\end{align}
holds.

Throughout the paper, $\|.\|$ denotes the $L^2(\Omega)$ norm and the $L^2$ Sobolev norm of order $k$, $k\in\mathbb{N}$, is denoted $\|.\|_k$. On functions these norms have a standard meaning, but on forms some ambiguity is involved. If $f$ and $g$ are measurable functions on $\Omega$, their $L^2$ inner product is
$$(f,g)=\int_\Omega f\,\bar g\, dV_E$$
where $dV_{E}$ denotes the Euclidean volume form (which we usually drop). The $L^2$ norm is then $\|f\|=\sqrt{(f,f)}$ and $L^2(\Omega)$ denotes the set of such $f$ with $\|f\|<\infty$. Also, $\|f\|_k=\sum_{|\alpha|=k}\left\|D^\alpha f\right\|$ where $D^\alpha$ is partial differentiation of order determined by the multi-index $\alpha$, taken in the sense of distributions, and $H^k(\Omega)$ denotes all $f$ such that $\|f\|_k <\infty$. On $(0,q)$-forms, we declare that elements of the basis given by $d\bar z_{i_1}\wedge\dots\wedge d\bar z_{i_q}$, $i_1 <\dots <i_q$, are (pointwise) orthonormal and define $\|.\|$ and $\|.\|_k$ as above on the components of a form relative to this basis. Thus, if a $(0,q)$-form $u$ is expressed as
\begin{align*}
  u(z)=\psum_{|J|=q}u_{J}(z)d\bar{z}^{J},\qquad z\in U,
\end{align*}
where $J$ is multi-index of length $q$, the $u_{J}$'s are functions, and $\sum_{|J|=q}'$  means that the sum is taken only over strictly increasing multi-indices, we set
$\|u\|=\psum_{|J|=q}\left\|u_{J}\right\|$ and $\|u\|_k=\psum_{|J|=q}\left\|u_{J}\right\|_k$.

\medskip

From the theory of the $\bar\partial$-Neumann problem, we shall recall only a few rudimentary facts needed in the proof of Theorem \ref{T:MainTheorem}. For proofs of these facts, and further information about the $\bar\partial$-Neumann problem, we refer the reader to
\cite{FolKoh72,Straube10}. 

The Cauchy-Riemann operator $\dbar: \Lambda^{0,q}(\overline{\Omega})\longrightarrow\Lambda^{0,q+1}(\overline{\Omega})$, defined initially on $(0,q)$-forms whose components belong to $C^{\infty}(\overline{\Omega})$, can be extended to an operator acting on $(0,q)$-forms with square-integrable components, $L_{0,q}^{2}(\Omega)$. An adjoint operator, $\bar\partial^*$, to $\dbar$ may then be defined using the standard $L_{0,q}^{2}(\Omega)$ inner product. The symbol $\vartheta$ will be used to denote the action of $\bar\partial^*$ but as a distributional differential operator; in particular, $\vartheta$ can be applied to any element of $L_{0,q}^{2}(\Omega)$.
The operator $\dbarstar$ comes with a naturally associated domain, $\text{Dom}(\dbarstar)$, so that, e.g., $u(z)=\sum_{j=1}^{n}u_{j}(z)\;d\bar{z}_{j}\in\Lambda^{0,1}(\overline{\Omega})$ belongs to $\text{Dom}(\dbarstar)$ if
\begin{align}\label{E:domaindbarstar}
  \sum_{j=1}^{n}r_{z_{j}}(z)u_{j}(z)=0\qquad\sjump\forall\sjump z\in b\Omega
\end{align}
holds.  The space $\Lambda^{0,1}(\overline{\Omega})\cap\text{Dom}(\dbarstar)$ is denoted by $\mathcal{D}^{0,1}(\Omega)$.

\medskip

The complex Laplacian is defined as $\Box u=\dbar\dbarstar u+\dbarstar\dbar u$ for those $u\in \text{Dom}(\dbar)\cap \text{Dom}(\dbarstar)$ for which $\dbar u\in \text{Dom}(\dbarstar)$ and $\dbarstar u\in \text{Dom}(\dbar)$ holds; the set of these forms is denoted by $\text{Dom}(\Box)$. 
For the purpose at hand, only the complex Laplacian on $(0,1)$-forms is of relevance, so that we restrict our considerations to this bi-degree.

The Dirichlet form associated to the $(\Box,\text{Dom}(\Box))$ boundary value problem,
$$Q(u,u)=\|\dbar u\|^{2}+\|\dbarstar u\|^{2},$$
satisfies the following basic estimate, with a constant independent of $u$, if $\Omega$ is pseudoconvex:
\begin{align}\label{E:basicestimate}
  \|u\|^{2}+\sum_{k=1}^{n}\left\|\frac{\partial u}{\partial\bar{z}_{k}}\right\|^{2}\lesssim Q(u,u)\qquad\sjump\forall\sjump
  u\in\mathcal{D}^{0,1}(\Omega)
\end{align}
where the expression $\frac{\partial u}{\partial \bar{z}_{k}}$ denotes $\frac{\partial}{\partial\bar{z}_{k}}$ acting on each component of $u$.
Inequality \eqref{E:basicestimate} implies that the $\dbar$-Neumann operator $N: L_{0,1}^{2}(\Omega)\longrightarrow \text{Dom}(\Box)$, satisfying $N\circ\Box=Id_{|\text{Dom}(\Box)}$ and $\Box\circ N=Id$, exists and is bounded on $L_{0,1}^{2}(\Omega)$. If $N$ satisfies the stronger condition that it is a {\it compact} operator, Kohn and Nirenberg \cite{KohNir65} showed that $N$ preserves the class of $(0,1)$-forms that are smooth up to the boundary. They also showed that compactness of $N$ is equivalent to the following collection of inequalities: for every $\epsilon >0$, there exists a constant $C(\epsilon)$ such that

\begin{equation}\label{E:compact}
\|u\|^2\leq \epsilon\,\, Q(u,u)+ C(\epsilon)|u|_{-1},\qquad\forall\sjump u\in \mathcal{D}^{0,1}(\Omega)
\end{equation}
holds, where $|.|_{-1}$ denotes the ordinary Sobolev norm of order -1 (or any norm compact with respect to $\|.\|$). The property that $N$ is compact does not hold on all smoothly bounded pseudoconvex domains, but many classes of such domains are known to have this property, see \cite{FuStr01}.

The relationship between $N$ and the Bergman projection $B$ which we will use is expressed by Kohn's formula:
\begin{align}\label{E:Kohnsformula}
  B=Id-\dbarstar N\dbar.
\end{align}

We refer to the inequality $2|ab|\leq \epsilon a^{2}+\frac{1}{\epsilon}b^{2}$, $a,b\in\mathbb{R}$ and $\epsilon>0$, as the (sc)-(lc) inequality. Also, the expression $|a|\lesssim|b|$ will mean that there exists a constant $K>0$, independent of certain parameters (that will be specified or clear when used), such that
$|a|\leq K |b|$ holds. The usual notation for the commutator of two operators $A$ and $B$ is used: $[A,B]=AB-BA$.

\section{Analysis in the bad direction}\label{S:bad}

The Cauchy-Riemann fields induce a splitting of the complexified tangent bundle to $\mathbb{C}^{n}$, $\mathbb{C}T\left(\mathbb{C}^{n}\right)=T\left(\mathbb{C}^{n}\right)\otimes\mathbb{C}$, written $$\mathbb{C}T\left(\mathbb{C}^{n}\right)=T^{1,0}\left(\mathbb{C}^{n}\right)\oplus T^{0,1}\left(\mathbb{C}^{n}\right)$$
where $T^{1,0}\left(\mathbb{C}^{n}\right)$ denotes the span of $\bigl\{\frac{\partial}{\partial z_1},\dots, \frac{\partial}{\partial z_n}\bigr\}$ and 
$T^{0,1}\left(\mathbb{C}^{n}\right)=\text{span}\bigl\{\frac{\partial}{\partial\bar z_1},\dots, \frac{\partial}{\partial\bar z_n}\bigr\}$.
The real tangent bundle to $b\Omega$, $T\left(b\Omega\right)$, also splits via the Cauchy-Riemann structure. Let $T^{1,0}\left(b\Omega\right)=T\left(b\Omega\right)\cap T^{1,0}\left(\mathbb{C}^{n}\right)$ and $T^{0,1}\left(b\Omega\right)=T\left(b\Omega\right)\cap T^{0,1}\left(\mathbb{C}^{n}\right)$. Then 
$$T\left(b\Omega\right)=T^{1,0}\left(b\Omega\right)\oplus T^{0,1}\left(b\Omega\right)\oplus\mathcal{B}$$
where $\mathcal{B}$, satisfying $\text{dim}_{\mathbb{R}}\mathcal{B}=1$, is the ``bad direction'' tangent to $b\Omega$. 
Note that $\mathcal{B}$ is spanned by the vector field $X$ defined in \eqref{D:badtangential}.

The globally defined vector fields
$$\cn=\sum_{j=1}^n r_{\bar z_j}\frac{\partial}{\partial z_j}\quad\text{  and  }\quad\overline{\cn}=\sum_{j=1}^n r_{z_j}\frac{\partial}{\partial\bar z_j},$$
in $T^{1,0}\left(\mathbb{C}^{n}\right)$ and $T^{0,1}\left(\mathbb{C}^{n}\right)$ respectively, are transverse to $b\Omega$ since 
$\cn (r)=\overline{\cn} (r)=\sum_{j=1}^n |r_{z_j}|^2\neq 0$ on $b\Omega$. 
For $k\in\{1,\dots, n\}$, define the vector fields

\begin{equation}\label{E:L_k}
L_k=\frac{\partial}{\partial z_k}-\chi\cdot r_{z_k}\cn
\end{equation}
and their conjugates $\overline{L}_k$, where $\chi\in C^{\infty}(\overline{\Omega})$ equals $(\sum_{j=1}^{n}|r_{z_{j}}|^{2})^{-1}$ in some neighborhood of $b\Omega$.

\begin{proposition}\label{P:tangentspan}
$$T^{1,0}\left(b\Omega\right)=\text{\rm span}\left\{L_1,\dots, L_n\right\}.$$
Similarly, $T^{0,1}\left(b\Omega\right)$ is spanned by $\left\{\overline{L}_k: k=1,\dots, n\right\}$.
\end{proposition}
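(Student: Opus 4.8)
The plan is to verify the two assertions of Proposition~\ref{P:tangentspan} by direct computation with the defining function $r$, in a neighborhood of $b\Omega$ where $\chi=(\sum_j|r_{z_j}|^2)^{-1}$. First I would check that each $L_k$ is actually tangent to $b\Omega$, i.e. that $L_k(r)=0$ on $b\Omega$. This is immediate from the definition: $L_k(r)=r_{z_k}-\chi\cdot r_{z_k}\cn(r)=r_{z_k}-\chi\cdot r_{z_k}\sum_{j=1}^n|r_{z_j}|^2=r_{z_k}-r_{z_k}=0$ near $b\Omega$, using $\cn(r)=\sum_j|r_{z_j}|^2$ and the formula for $\chi$. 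Since each $L_k$ is a $(1,0)$ vector field (it is a combination of $\tfrac{\partial}{\partial z_j}$'s) that annihilates $r$, it lies in $T^{1,0}(b\Omega)$; hence $\mathrm{span}\{L_1,\dots,L_n\}\subseteq T^{1,0}(b\Omega)$.

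For the reverse inclusion I would argue by dimension count together with a linear-independence statement that holds modulo $\cn$. Recall $\dim_{\mathbb C} T^{1,0}(b\Omega)=n-1$ at each boundary point, while $T^{1,0}(\mathbb C^n)$ has dimension $n$ and is spanned by $\tfrac{\partial}{\partial z_1},\dots,\tfrac{\partial}{\partial z_n}$. The key observation is that, modulo the transverse field $\cn$, the vectors $L_1,\dots,L_n$ represent exactly $\tfrac{\partial}{\partial z_1},\dots,\tfrac{\partial}{\partial z_n}$, so they span an $n$-dimensional space together with $\cn$; concretely, one checks the identity $\sum_{k=1}^n r_{\bar z_k}L_k=\cn-\chi\bigl(\sum_k|r_{z_k}|^2\bigr)\cn=\cn-\cn=0$ near $b\Omega$ — wait, this shows the $L_k$ satisfy one relation, namely $\sum_k r_{\bar z_k}L_k=0$, which is consistent with their spanning an $(n-1)$-dimensional space. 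So the correct statement is: the $n$ fields $L_1,\dots,L_n$ span a space of dimension $n-1$ (the single relation $\sum_k r_{\bar z_k}L_k\equiv 0$ being the only one, since the $r_{\bar z_k}$ do not all vanish on $b\Omega$), and that space is contained in the $(n-1)$-dimensional space $T^{1,0}(b\Omega)$, forcing equality. To see the span is genuinely $(n-1)$-dimensional and not smaller, I would note that the projection of $L_k$ to $T^{1,0}(\mathbb C^n)/\mathbb C\cn$ is the class of $\tfrac{\partial}{\partial z_k}$, and these classes span the whole quotient, which has dimension $n-1$ precisely because $\cn\notin\{0\}$; hence the $L_k$ span at least an $(n-1)$-dimensional subspace of the $n$-dimensional $T^{1,0}(\mathbb C^n)$, and since they all lie in the $(n-1)$-dimensional $T^{1,0}(b\Omega)$, the span must equal $T^{1,0}(b\Omega)$.

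The statement for $T^{0,1}(b\Omega)$ follows by conjugation: taking complex conjugates of the identities above, $\overline{L}_k=\tfrac{\partial}{\partial\bar z_k}-\bar\chi\cdot r_{\bar z_k}\overline{\cn}$ satisfies $\overline{L}_k(r)=0$ near $b\Omega$ (since $r$ is real-valued and $\chi$ is real), so each $\overline{L}_k\in T^{0,1}(b\Omega)$, and the same dimension count — now using $\overline{\cn}(r)=\sum_j|r_{z_j}|^2\neq 0$ on $b\Omega$ and the relation $\sum_k r_{z_k}\overline{L}_k\equiv 0$ — gives $T^{0,1}(b\Omega)=\mathrm{span}\{\overline{L}_1,\dots,\overline{L}_n\}$.

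I do not expect a serious obstacle here; the proposition is a standard normalization of the complex tangential frame and the only thing to be careful about is bookkeeping the single linear relation $\sum_k r_{\bar z_k}L_k\equiv 0$ and confirming that no further relations occur, which reduces to the nonvanishing of $(r_{z_1},\dots,r_{z_n})$ on $b\Omega$ — a consequence of $\nabla r\neq 0$ recorded in Section~\ref{S:Preliminaries}. The mildly delicate point is simply to phrase the span computation cleanly, using the quotient $T^{1,0}(\mathbb C^n)/\mathbb C\cn$ rather than attempting to exhibit an explicit basis among the $L_k$, since which $n-1$ of the $n$ fields $L_k$ form a basis depends on the point of $b\Omega$.
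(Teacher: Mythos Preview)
Your proposal is correct and follows essentially the same line as the paper's proof: both first verify $L_k(r)=0$ to place the $L_k$ inside $T^{1,0}(b\Omega)$, and both then use the decomposition $\tfrac{\partial}{\partial z_k}=L_k+\chi\,r_{z_k}\cn$ together with the transversality $\cn(r)\neq 0$ on $b\Omega$. The only cosmetic difference is that the paper argues directly from $T^{1,0}(\mathbb{C}^n)=\mathrm{span}\{L_1,\dots,L_n,\cn\}$ and the nontangentiality of $\cn$, while you reach the same conclusion via the dimension count on the quotient $T^{1,0}(\mathbb{C}^n)/\mathbb{C}\cn$; the underlying content is identical.
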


\begin{proof} Note that $L_k(r)=0$ for $k=1,\dots, n$, so each $L_k$ is tangent to $b\Omega$. Since \eqref{E:L_k} implies that 
$\frac{\partial}{\partial z_k}=L_k+\chi\cdot r_{z_k}\cn$ for $k=1,\dots, n$, the statement $T^{1,0}\left(\mathbb{C}^{n}\right)=\text{\rm span}\left\{L_1,\dots, L_n, \cn\right\}$ is immediate. The fact that $\cn(r)\neq 0$ on $b\Omega$ then yields the claimed statement about $T^{1,0}\left(b\Omega\right)$. The statement about $T^{0,1}\left(b\Omega\right)$ is proved analogously.
\end{proof}

A useful frame for all $\mathbb{C}T\left(\mathbb{C}^{n}\right)$, that contains the fields \eqref{E:L_k}, may also be isolated.

\begin{proposition}\label{P:fullspan}

$$\mathbb{C}T\left(\mathbb{C}^{n}\right)=\text{\rm span}\left\{L_1,\dots, L_n, \overline{L}_1,\dots, \overline{L}_n,X, \overline{\cn}\right\}.$$
\end{proposition}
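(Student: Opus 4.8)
The plan is to show that the $2n$ vector fields $\{L_1,\dots,L_n,\overline{L}_1,\dots,\overline{L}_n,X,\overline{\cn}\}$ (there are $2n+2$ of them) span $\mathbb{C}T(\mathbb{C}^n)$, which has complex dimension $2n$. Since we have more fields than the dimension, it suffices to exhibit, at each point of a neighborhood of $b\Omega$ (and then globally, since away from $b\Omega$ the cutoff $\chi$ and the transversality of $\cn$ may degenerate and one argues directly that $\{L_k,\overline{L}_k\}$ already include $\partial/\partial z_k,\partial/\partial\bar z_k$ there—see below), a subcollection of $2n$ of these fields that is linearly independent. The natural candidate is $\{L_1,\dots,L_n,\overline{L}_1,\dots,\overline{L}_{n-1}\}$ together with two of $\{\overline{L}_n, X, \overline{\cn}\}$, or more cleanly: combine Proposition \ref{P:tangentspan} with the observation that $\cn$ and $\overline{\cn}$ recover the transverse directions.

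First I would recall from the proof of Proposition \ref{P:tangentspan} that $T^{1,0}(\mathbb{C}^n)=\text{span}\{L_1,\dots,L_n,\cn\}$ and $T^{0,1}(\mathbb{C}^n)=\text{span}\{\overline{L}_1,\dots,\overline{L}_n,\overline{\cn}\}$, hence
\begin{align*}
  \mathbb{C}T(\mathbb{C}^n)=\text{span}\left\{L_1,\dots,L_n,\overline{L}_1,\dots,\overline{L}_n,\cn,\overline{\cn}\right\}.
\end{align*}
So the only thing to verify is that $\cn$ lies in the span of the listed fields, i.e. that $\cn$ can be replaced by $X$. From the definitions, $X=\cn-\overline{\cn}$ (comparing \eqref{D:badtangential} with the formulas for $\cn,\overline{\cn}$: $X=\sum_j r_{\bar z_j}\partial/\partial z_j - \sum_j r_{z_j}\partial/\partial\bar z_j = \cn-\overline{\cn}$). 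Therefore $\cn=X+\overline{\cn}$, which exhibits $\cn$ as a linear combination of $X$ and $\overline{\cn}$, both on the list. Substituting this into the spanning set above gives exactly the claim.

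For the global statement (away from a neighborhood of $b\Omega$, where $\chi$ need not equal $(\sum|r_{z_j}|^2)^{-1}$), note that the identity $X=\cn-\overline{\cn}$ holds everywhere on $\mathbb{C}^n$ regardless of $\chi$, and likewise $\partial/\partial z_k = L_k+\chi r_{z_k}\cn$ holds everywhere by the very definition \eqref{E:L_k}; thus $\text{span}\{L_1,\dots,L_n,\cn\}=T^{1,0}(\mathbb{C}^n)$ and $\text{span}\{\overline{L}_1,\dots,\overline{L}_n,\overline{\cn}\}=T^{0,1}(\mathbb{C}^n)$ hold at every point, and the argument of the previous paragraph goes through verbatim.

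There is essentially no obstacle here: the proposition is a bookkeeping consequence of \eqref{E:L_k}, the definitions of $\cn,\overline{\cn},X$, and the already-proved Proposition \ref{P:tangentspan}. The only point requiring the smallest amount of care is recording the algebraic identity $X=\cn-\overline{\cn}$, which reduces the new frame to the old one $\{L_k,\overline{L}_k,\cn,\overline{\cn}\}$; everything else is immediate. I would write the proof in three lines accordingly.
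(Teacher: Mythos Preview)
Your proposal is correct and is exactly the argument the paper has in mind; the paper's own proof is the single line ``The proof is similar to Proposition \ref{P:tangentspan},'' and your key observation $X=\cn-\overline{\cn}$ (so $\cn=X+\overline{\cn}$) is precisely what makes that similarity work. The discussion of what happens away from $b\Omega$ is unnecessary but harmless.
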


\begin{proof} The proof is similar to Proposition 2.2.
\end{proof}

\medskip
\noindent{\it Remark:} Note that Propositions 2.2 and 2.3 give more spanning vectors than the dimensions of the spaces $T^{1,0}\left(b\Omega\right)$ and 
$\mathbb{C}T\left(\mathbb{C}^{n}\right)$ require. This over-prescription of spanning vectors will simplify some technicalities in Section \ref{S:Proof}.
\medskip

The following integration by parts result will play a crucial role in Section \ref{S:Proof}.

\begin{proposition}\label{P:basicLestimate} Let $\Omega\subset\mathbb{C}^n$ be a smoothly bounded domain. 
 Then  for all $g\in C^{\infty}(\overline{\Omega})$ and $\epsilon>0$
  \begin{align}\label{E:basicLestimate}
    \sum_{j=1}^{n}\|L_{j}g\|^{2}\lesssim\epsilon\|Xg\|^{2}+\frac{1}{\epsilon}\|g\|^{2}+\sum_{j=1}^{n}\|\overline{L}_{j}g\|^{2}
    +\epsilon\|r\overline{\mathcal{N}}g\|^{2}
  \end{align}
  holds with a constant independent of $g$.
\end{proposition}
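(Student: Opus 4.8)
The plan is to prove \eqref{E:basicLestimate} by a direct integration by parts, exploiting the fact that the ``bad'' real tangential field $X$ together with $\overline{\mathcal{N}}$ essentially recovers the $\frac{\partial}{\partial z_j}$-derivatives. The key algebraic identity is that, near $b\Omega$, one can solve for $\mathcal{N}$ in terms of $X$ and $\overline{\mathcal{N}}$: indeed $X = \mathcal{N} - \overline{\mathcal{N}}$, so $\mathcal{N} = X + \overline{\mathcal{N}}$, and hence from \eqref{E:L_k}, $L_j g = \frac{\partial g}{\partial z_j} - \chi r_{z_j}(Xg + \overline{\mathcal{N}}g)$. Thus it suffices to estimate $\sum_j\|\frac{\partial g}{\partial z_j}\|^2$ by the right-hand side, since the terms $\chi r_{z_j} Xg$ contribute $\lesssim \|Xg\|^2$ directly and, crucially, one needs $\chi r_{z_j}\overline{\mathcal{N}}g$ to be controlled — here the factor $r$ in the last term $\epsilon\|r\overline{\mathcal{N}}g\|^2$ on the right enters, because $\chi r_{z_j}$ does \emph{not} vanish on $b\Omega$. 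This is the subtle point: a priori $\|\chi r_{z_j}\overline{\mathcal{N}}g\|$ is \emph{not} bounded by $\|r\overline{\mathcal{N}}g\|$. So the naive solving-for-$\mathcal{N}$ approach must be refined.

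The correct route is to integrate by parts on $\|\frac{\partial g}{\partial z_j}\|^2 = (\frac{\partial g}{\partial z_j}, \frac{\partial g}{\partial z_j})$. Moving the derivative off the second factor: $(\frac{\partial g}{\partial z_j}, \frac{\partial g}{\partial z_j}) = -(g, \frac{\partial}{\partial z_j}\frac{\partial g}{\partial z_j})^{-} + (\text{boundary term})$, where the adjoint of $\frac{\partial}{\partial z_j}$ with respect to the $L^2(\Omega)$ inner product is $-\frac{\partial}{\partial \bar z_j}$ plus a boundary integral over $b\Omega$ with weight $\frac{1}{2}\frac{r_{z_j}}{|\nabla r|}$ (up to harmless smooth factors). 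Hence
\begin{align*}
  \sum_{j=1}^n\left\|\frac{\partial g}{\partial z_j}\right\|^2
  = -\sum_{j=1}^n\left(\frac{\partial^2 g}{\partial z_j\partial\bar z_j}, g\right) + c\int_{b\Omega}\sum_{j=1}^n \frac{\partial g}{\partial z_j}\,\overline{g}\; r_{z_j}\, dS,
\end{align*}
and the first (interior) term, after another integration by parts or by pairing differently, is $\lesssim \sum_j\|\overline{L}_j g\|^2 + \|g\|^2$ plus controllable remainders, while the \emph{boundary} term is the one to handle carefully. On $b\Omega$, $\sum_j r_{z_j}\frac{\partial g}{\partial z_j} = \mathcal{N}g = Xg + \overline{\mathcal{N}}g$, and $\overline{\mathcal{N}}g|_{b\Omega}$ — which does \emph{not} vanish in general — is what forces a genuine argument: one rewrites the boundary integral back as an interior integral of $\frac{\partial}{\partial\bar z_k}$ (or $\mathcal{N}$) applied to $(\ldots)\,r$ using the divergence theorem in reverse, i.e. $\int_{b\Omega}\psi\,dS \sim \int_\Omega \overline{\mathcal{N}}(\psi) + \psi\,\overline{\mathcal{N}}(1/|\nabla r|)\cdots$; the presence of $r$ in $\epsilon\|r\overline{\mathcal{N}}g\|^2$ appears precisely because when $\overline{\mathcal{N}}$ lands on $g$ one must restore a factor of $r$ to make the term well-defined in $L^2(\Omega)$ rather than on $b\Omega$.

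Concretely, I expect the cleanest execution is: write $|\nabla r|^{-2}\,r_{z_j} = \chi r_{z_j}$ (valid near $b\Omega$; away from $b\Omega$ everything is smooth and harmless, so localize with a cutoff), and compute
\begin{align*}
  \left(\frac{\partial g}{\partial z_j},\frac{\partial g}{\partial z_j}\right)
  = \left(L_j g + \chi r_{z_j}\mathcal{N}g,\ \frac{\partial g}{\partial z_j}\right).
\end{align*}
The first piece gives $(L_j g, \frac{\partial g}{\partial z_j})$, which by (sc)-(lc) is $\lesssim \epsilon'\|\frac{\partial g}{\partial z_j}\|^2 + \frac1{\epsilon'}\|L_j g\|^2$ — absorbable, but circular unless handled last; better to split $\frac{\partial g}{\partial z_j}$ in the \emph{second} slot too and get $\|L_jg\|^2 + (L_jg,\chi r_{z_j}\mathcal{N}g) + (\chi r_{z_j}\mathcal{N}g, L_jg) + \|\chi r_{z_j}\mathcal{N}g\|^2$. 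Now integrate by parts \emph{in the pure term} $\|\chi r_{z_j}\mathcal{N}g\|^2$: since $\mathcal{N} = X + \overline{\mathcal{N}}$ and $X$ is tangential (no boundary term, adjoint $-X + $ smooth), while the adjoint of $\overline{\mathcal{N}}$ produces a boundary integral whose integrand contains $r_{z_j}\cdot r \equiv 0$ on $b\Omega$ if one is careful to pair $\overline{\mathcal{N}}$ against something carrying the factor $r$ — this is where $\|r\overline{\mathcal{N}}g\|^2$ is generated and the boundary term is killed. The main obstacle, and the step I would allocate the most care to, is exactly this bookkeeping of boundary terms: ensuring that every boundary integral that arises either cancels (because its integrand contains the factor $r$), or is re-expressed as an interior $L^2$ quantity of the form $\|Xg\|$, $\|\overline{L}_j g\|$, $\|g\|$, or $\|r\overline{\mathcal{N}}g\|$ with the stated $\epsilon$-weights, without ever leaving a bare $\|\overline{\mathcal{N}}g\|$ on $b\Omega$. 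Once the boundary terms are controlled, the interior terms succumb to (sc)-(lc) and the observation that $\frac{\partial g}{\partial\bar z_j}$ is, modulo the tangential fields $\overline{L}_j$ and a multiple of $\overline{\mathcal{N}}g$, which again carries the needed structure. Finally, a density argument extends \eqref{E:basicLestimate} from $C^\infty(\overline\Omega)$ to the relevant Sobolev setting, though as stated the proposition only asserts it for $g\in C^\infty(\overline\Omega)$, so no density step is strictly required.
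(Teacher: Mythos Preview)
Your proposal does not reach a complete argument, and the route you are pursuing is not the one that makes the proposition fall out cleanly. You correctly identify the central difficulty --- that a bare $\|\overline{\mathcal{N}}g\|$ (or equivalently $\|\chi r_{z_j}\overline{\mathcal{N}}g\|$) must never appear --- but none of the decompositions you try actually avoids it. Integrating by parts on $\|\partial g/\partial z_j\|^2$ produces genuine boundary integrals involving $\mathcal{N}g|_{b\Omega}$, and your sketch of ``rewriting the boundary integral back as an interior integral'' does not explain why a factor of $r$ materializes on the $\overline{\mathcal{N}}g$ term; in fact it will not, because $\partial/\partial z_j$ has a nonvanishing normal component on $b\Omega$. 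The repeated rewritings in the second half of the proposal are circular and never close.

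The missing idea is to integrate by parts using the \emph{tangential} fields $L_j$ themselves, so that no boundary terms arise at all. Since each $L_j$ is tangent to $b\Omega$, one has $\|L_jg\|^2 = (L_j^*L_jg,g)$ with $L_j^* = -\overline{L}_j$ modulo zeroth order. This reduces the problem to controlling $|([\overline{L}_j,L_j]g,\,g)|$. The commutator $[\overline{L}_j,L_j]$ is again tangential (both factors are), and therefore by Propositions~\ref{P:tangentspan} and~\ref{P:fullspan} it expands as a $C^\infty(\overline\Omega)$-combination of $L_k$, $\overline{L}_k$, $X$, and $r\overline{\mathcal{N}}$ --- the factor $r$ on $\overline{\mathcal{N}}$ is automatic precisely because the commutator is tangential. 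Pairing each piece with $g$ via Cauchy--Schwarz and applying (sc)--(lc) then yields \eqref{E:basicLestimate} directly, with the $\epsilon$--weights appearing exactly where stated. This is how the factor $r$ enters: not through any boundary manipulation, but because tangential first-order operators have normal component divisible by $r$.
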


\begin{proof}
 Let $L_{j}^{*}$ be the formal adjoint of $L_{j}$, $j\in\{1,\dots,n\}$. Then,
  since the $L_{j}$'s are tangential to $b\Omega$, integration by parts yields
  \begin{align*}
    \sum_{j=1}^{n}\|L_{j}g\|^{2}&=\sum_{j=1}\left(L_{j}^{*}L_{j}g,g\right)\\
    &\lesssim\sum_{j=1}^{n}\left\{\left|(\overline{L}_{j}L_{j}g,g) \right|+\|L_{j}g\|\cdot\|g\|
    \right\},
  \end{align*}
  where the last estimate follows since $L_j^*$ equals $-\bar L_j $ modulo multiplication by a function in $C^\infty(\overline\Omega)$.
  Applying the (sc)-(lc) inequality then gives
  \begin{align*}
    \sum_{j=1}^{n}\|L_{j}g\|^{2}&\lesssim \sum_{j=1}^{n}\left|\left(\overline{L}_{j}L_{j}g,g \right) \right|\\
    &\lesssim \sum_{j=1}^{n}\left\{
      \left|\left([\overline{L}_{j},L_{j}]g,g \right) \right|+\left|\left(\overline{L}_{j}g,L^{*}_{j}g \right) \right|
    \right\}\\
    &\lesssim\sum_{j=1}^{n}\left\{
     \left|\left([\overline{L}_{j},L_{j}]g,g \right) \right|+\left\|\overline{L}_{j}g\right\|\cdot\|g\|+\left\|\overline{L}_{j}g\right\|^{2}
    \right\}.
  \end{align*}
  Since $[\overline{L}_{j},L_{j}]\in T(b\Omega)$, Propositions \ref{P:tangentspan} and \ref{P:fullspan} imply that
   $[\overline{L}_{j},L_{j}]=\sum a_kL_k+\sum b_k\overline{L}_k +c_{1}X+c_{2}r\overline{\mathcal{N}}$. 
  Therefore,
  \begin{align*}
    \sum_{j=1}^{n}\|L_{j}g\|^{2}\lesssim
    \left(\|Xg\|+\|r\overline{\mathcal{N}}g\|+\sum_{j=1}^{n}\left\{\|L_{j}g\|+\|\overline{L}_{j}g\| \right\}\right)\cdot\|g\|
    +\sum_{j=1}^{n}\|\overline{L}_{j}g\|^{2}.
  \end{align*}
  Using the (sc)-(lc) inequality, we obtain for any $\epsilon>0$ that
  \begin{align*}
    \sum_{j=1}^{n}\|L_{j}g\|^{2}\lesssim\epsilon\|Xg\|^{2}+\frac{1}{\epsilon}\|g\|^{2}+\sum_{j=1}^{n}\|\overline{L}_{j}g\|^{2}+
    \epsilon\|r\overline{\mathcal{N}}g\|^{2}.
  \end{align*}
  \end{proof}
  
  \begin{corollary}\label{C:basicSobolev1estimate} 
  Let $\Omega\subset\mathbb{C}^{n}$ be a smoothly bounded domain. Then:
  \begin{itemize}
   \item[(i)] For all $g\in\mathcal{C}^{\infty}(\overline{\Omega})$
  $$ \|g\|_{1}^{2}\lesssim\|Xg\|^{2}+\|g\|^{2}+\sum_{j=1}^{n}\|\overline{L}_{j}g\|^{2}+\|\overline{\cn}g\|^{2}$$
  holds, where the constant is independent of $g$.
  \item[(ii)] For all $h\in\mathcal{O}(\Omega)\cap C^{\infty}(\overline{\Omega})$
    $$ \|h\|_{k}^{2}\lesssim\sum_{\ell=0}^{k}\|X^{\ell}h\|^{2}$$
  holds with a constant independent of $h$.
  \end{itemize}
  \end{corollary}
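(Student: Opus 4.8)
\textbf{Proof proposal for Corollary \ref{C:basicSobolev1estimate}.}

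The plan is to derive (i) from Proposition \ref{P:fullspan} and Proposition \ref{P:basicLestimate}, and then to bootstrap (i) into (ii) by induction on $k$, exploiting that for holomorphic $h$ the antiholomorphic derivatives $\overline{L}_j h$ and $\overline{\mathcal N} h$ all vanish.

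For part (i): since $\{L_1,\dots,L_n,\overline{L}_1,\dots,\overline{L}_n,X,\overline{\mathcal N}\}$ spans $\mathbb{C}T(\mathbb{C}^n)$ at every point (Proposition \ref{P:fullspan}), every first-order partial derivative $\tfrac{\partial}{\partial z_k}g$, $\tfrac{\partial}{\partial\bar z_k}g$ can be written as a $C^\infty(\overline\Omega)$-linear combination of $L_j g$, $\overline{L}_j g$, $Xg$, $\overline{\mathcal N}g$; hence $\|g\|_1^2 \lesssim \sum_j \|L_j g\|^2 + \sum_j \|\overline{L}_j g\|^2 + \|Xg\|^2 + \|\overline{\mathcal N}g\|^2 + \|g\|^2$. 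Now invoke Proposition \ref{P:basicLestimate} with $\epsilon$ fixed (say $\epsilon = 1$) to replace $\sum_j\|L_j g\|^2$ by $\|Xg\|^2 + \|g\|^2 + \sum_j\|\overline{L}_j g\|^2 + \|r\overline{\mathcal N}g\|^2$. Since $r$ is bounded on $\overline\Omega$, $\|r\overline{\mathcal N}g\| \lesssim \|\overline{\mathcal N}g\|$, and (i) follows.

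For part (ii), induct on $k$. The base case $k=0$ is trivial. Assume the estimate holds for all orders up to $k-1$ and let $h\in\mathcal{O}(\Omega)\cap C^\infty(\overline\Omega)$. Since $\overline{L}_j$ and $\overline{\mathcal N}$ are combinations of the $\tfrac{\partial}{\partial\bar z_k}$ with $C^\infty(\overline\Omega)$ coefficients, $\overline{L}_j h = 0$ and $\overline{\mathcal N}h = 0$; the same holds with $h$ replaced by any $\tfrac{\partial}{\partial z}$-derivative of $h$, which is again holomorphic. Thus, to control $\|h\|_k$ one must control $\|D^\alpha h\|_1$ for $|\alpha| = k-1$, and by (i) applied to each holomorphic function $D^\alpha h$ the antiholomorphic terms drop out, leaving $\|D^\alpha h\|_1^2 \lesssim \|X D^\alpha h\|^2 + \|D^\alpha h\|^2$. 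The remaining task is to commute $X$ past $D^\alpha$: write $X D^\alpha h = D^\alpha X h + [X, D^\alpha] h$, where the commutator $[X, D^\alpha]$ is a differential operator of order $k-1$ with $C^\infty(\overline\Omega)$ coefficients. Hence $\|X D^\alpha h\| \lesssim \|Xh\|_{k-1} + \|h\|_{k-1}$; applying the induction hypothesis to the holomorphic function $Xh$ gives $\|Xh\|_{k-1}^2 \lesssim \sum_{\ell=0}^{k-1}\|X^\ell(Xh)\|^2 = \sum_{\ell=1}^{k}\|X^\ell h\|^2$, and applying it to $h$ itself bounds $\|h\|_{k-1}$. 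Summing over $|\alpha| = k-1$ and adding $\|h\|_{k-1}^2 \lesssim \sum_{\ell=0}^{k-1}\|X^\ell h\|^2$ closes the induction.

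The one point requiring care — the main obstacle, though a mild one — is the commutator bookkeeping: one must verify that $[X, D^\alpha]$ genuinely has order $\le k-1$ and smooth coefficients up to $\overline\Omega$ (immediate since $X$ has $C^\infty(\overline\Omega)$ coefficients), and that all constants produced are independent of $h$, which holds because every coefficient function is fixed by $r$ and $\chi$ alone. One should also note that (i) is stated for $X$ and $\overline{\mathcal N}$ applied componentwise, and that using over-determined frames (the Remark after Proposition \ref{P:fullspan}) costs nothing since we only need upper bounds.
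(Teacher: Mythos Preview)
Your argument for (i) is fine and matches the paper's. The problem is in (ii): you apply the induction hypothesis to ``the holomorphic function $Xh$'', but $Xh$ is \emph{not} holomorphic. Since $h\in\mathcal{O}(\Omega)$ one has $Xh=\cn h=\sum_j r_{\bar z_j}h_{z_j}$, and $\frac{\partial}{\partial\bar z_k}(Xh)=\sum_j r_{\bar z_j\bar z_k}h_{z_j}$ does not vanish in general. So the step $\|Xh\|_{k-1}^2\lesssim\sum_{\ell=0}^{k-1}\|X^\ell(Xh)\|^2$ is unjustified, and the induction does not close.

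The paper avoids this by peeling off the holomorphic derivatives one at a time rather than all at once. Starting from $\|D^\alpha h\|^2$ with $|\alpha|=k$, one applies (i) to the holomorphic function $D^{\alpha'}h$ (where $|\alpha'|=k-1$) to get $\|D^\alpha h\|^2\lesssim\|XD^{\alpha'}h\|^2+\|h\|_{k-1}^2$. At the next step one applies (i) to $XD^{\alpha''}h$; this function is not holomorphic, but (i) does not require holomorphicity --- the point is that the barred terms $\overline{L}_j(XD^{\alpha''}h)$ and $\overline{\cn}(XD^{\alpha''}h)$ can be handled by commuting $\overline{L}_j,\overline{\cn}$ past the single factor $X$ (a first-order commutator, hence $\lesssim\|h\|_{k-1}$) and then using that $\overline{L}_j,\overline{\cn}$ annihilate the holomorphic $D^{\alpha''}h$. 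Iterating $k$ times yields $\|D^\alpha h\|^2\lesssim\|X^k h\|^2+\|h\|_{k-1}^2$, and the induction closes. Your approach tries to shortcut this iteration by invoking (ii) on $Xh$, which is exactly what fails.
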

  
  \begin{proof}
    By Proposition \ref{P:fullspan} any first order derivative can be written as a linear combination of $\overline{\cn}$, $X$, the $L_{j}$'s and
    the $\overline{L}_{j}$'s. An application of Proposition \ref{P:basicLestimate} then completes the proof of (i).
    
    The case $k=1$ of (ii) follows immediately from (i) and $h\in\mathcal{O}(\Omega)$.
    Write $D^{\alpha}$ for $\frac{\partial^{|\alpha|}}{\partial z^{\alpha}}$ for $\alpha=(\alpha_{1},\dots,\alpha_{n})$ a multi-index.
     Then $h\in\mathcal{O}(\Omega)$ also implies that for given integer $k>1$
    \begin{align*}
      \|h\|_{k}^{2}\lesssim\sum_{|\alpha|\leq k}\left\|D^{\alpha}h \right\|^{2}\lesssim
      \sum_{|\alpha|=k}\left\|D^{\alpha}h \right\|^{2}+\|h\|_{k-1}^{2}
    \end{align*}
    holds. For given $\alpha$ with $|\alpha|=k$ 
    choose $i$ such that $\alpha_{i}>0$ and set
    $\alpha'=(\alpha_{1},\dots,\alpha_{i-1},\alpha_{i}-1,\alpha_{i+1},\dots,\alpha_{n})$. Then, since $\frac{\partial}{\partial z_{i}}=
    L_{i}+\chi\cdot r_{z_{i}}\overline{\cn}$, it follows that
    \begin{align*}
      \left\|D^{\alpha}h\right\|^{2}
      =\bigl\|D^{\alpha'}L_{i}h \bigr\|^{2}\lesssim\bigl\|L_{i}D^{\alpha'}h \bigr\|^{2}
      +\|h\|_{k-1}^{2}.
    \end{align*}
    Part (i) and $h\in\mathcal{O}(\Omega)$ yield
    \begin{align}\label{E:alphareduction}
      \bigl\|L_{i}D^{\alpha'}h \bigr\|^{2}&\lesssim\bigl\|XD^{\alpha'}h \bigr\|^{2}+\bigl\|D^{\alpha'}h \bigr\|^{2}
    +\sum_{j=1}^{n}\bigl\|\overline{L}_{i}D^{\alpha'}h \bigr\|^{2}+\bigl\|\overline{\cn}D^{\alpha'}h \bigr\|^{2}\notag\\
    &\lesssim\bigl\|XD^{\alpha'}h \bigr\|^{2}+\|h\|_{k-1}^{2}.
    \end{align}
    Repeating the arguments leading up to \eqref{E:alphareduction} $\alpha_{j}$-times for all $\alpha_{j}$ gives
    $$\|h\|_{k}^{2}\lesssim\sum_{\ell=0}^{k}\|X^{\ell}h\|^{2}+\|h\|_{k-1}^{2}.$$
    The claimed inequality (ii) now follows by induction on $k$.
  \end{proof}

\medskip

\section{Proof of Theorem \ref{T:MainTheorem}}\label{S:Proof}

The following spaces of Sobolev type occur in the conclusion of Theorem \ref{T:MainTheorem}:

\begin{definition}\label{D:H_X}
Let $\Omega\subset\mathbb{C}^{n}$ be smoothly bounded, $r$ a defining function for $\Omega$, and $X$ the vector field given by \eqref{D:badtangential}.

Define, for $k\in\mathbb{N}$,
\begin{align*}
H_{X}^{k}(\Omega)=\left\{f\in L^{2}(\Omega): \,\, X^{j}f \text{(as a distribution)} \in L^{2}(\Omega), j\in\{1,\dots,k\}\right\}.
\end{align*}
\end{definition}

For fixed $k$, $H_{X}^{k}(\Omega)$ is a Hilbert space with respect to the inner product
\begin{align*}
  (f,g)_{k,X}:=\sum_{j=0}^{k}\left(X^{j}f,X^{j}g\right)\qquad\sjump\forall\sjump f,g\in H_{X}^{k}(\Omega),
\end{align*}
and $C^{\infty}(\overline{\Omega})$ is dense in $H_{X}^{k}(\Omega)$ with respect to the norm $\|.\|_{k,X}$ induced by this inner product.
The completeness and density assertions are proved in the same way as for ordinary Sobolev spaces, see e.g.,  in \cite[Theorem 2 in \S 5.2, Theorem 3 in \S 5.3.3]{Evans98}.

\begin{lemma}\label{L:apriori} Suppose the hypotheses of Theorem \ref{T:MainTheorem}. If for each $k\in\mathbb{N}$ the inequality
\begin{equation}\label{E:apriori}
\|Bf\|_k\lesssim \|f\|_{k,X},\qquad\forall\sjump Bf, f\in C^\infty(\overline{\Omega}),
\end{equation}
holds, where the constant in $\lesssim$ does not depend on $f$, then the conclusion of Theorem \ref{T:MainTheorem} holds.
\end{lemma}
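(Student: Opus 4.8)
The plan is to show that the a priori inequality \eqref{E:apriori}, valid for smooth $f$ (and smooth $Bf$), extends by density to all $f\in H_X^k(\Omega)$, thereby establishing \eqref{E:Bestimate}. The main tool is the density of $C^\infty(\overline\Omega)$ in $H_X^k(\Omega)$ with respect to $\|.\|_{k,X}$, noted above, together with the $L^2$-continuity of $B$. First I would fix $f\in H_X^k(\Omega)$ and choose a sequence $f_\nu\in C^\infty(\overline\Omega)$ with $\|f_\nu-f\|_{k,X}\to 0$; in particular $f_\nu\to f$ in $L^2(\Omega)$. The immediate difficulty is that \eqref{E:apriori} is only assumed when \emph{both} $f$ and $Bf$ are smooth, whereas $Bf_\nu$ need not a priori be smooth up to the boundary. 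This is exactly where the hypothesis that $N$ is compact enters: by Kohn--Nirenberg \cite{KohNir65}, $B$ is globally regular, so $Bf_\nu\in C^\infty(\overline\Omega)$ for each $\nu$, and \eqref{E:apriori} applies to $f_\nu$.

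Next I would run a standard Cauchy-sequence argument. Applying \eqref{E:apriori} to the difference $f_\nu-f_\mu$ (legitimate since $f_\nu-f_\mu\in C^\infty(\overline\Omega)$ and $B(f_\nu-f_\mu)=Bf_\nu-Bf_\mu\in C^\infty(\overline\Omega)$ by global regularity) gives
\begin{align*}
  \|Bf_\nu-Bf_\mu\|_k \lesssim \|f_\nu-f_\mu\|_{k,X}\longrightarrow 0
\end{align*}
as $\nu,\mu\to\infty$. Hence $\{Bf_\nu\}$ is Cauchy in $H^k(\Omega)$, so it converges in $H^k(\Omega)$ to some $g$. On the other hand $f_\nu\to f$ in $L^2(\Omega)$ and $B$ is $L^2$-bounded, so $Bf_\nu\to Bf$ in $L^2(\Omega)$; since $H^k$-convergence implies $L^2$-convergence, the two limits agree, giving $g=Bf\in H^k(\Omega)$. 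Finally, passing to the limit in \eqref{E:apriori} written for $f_\nu$, and using $\|Bf_\nu\|_k\to\|Bf\|_k$ and $\|f_\nu\|_{k,X}\to\|f\|_{k,X}$, yields $\|Bf\|_k\lesssim\|f\|_{k,X}=\sum_{\ell=0}^k\|X^\ell f\|$, with the same constant $C_k$ as in \eqref{E:apriori}. This is precisely \eqref{E:Bestimate}, so the conclusion of Theorem \ref{T:MainTheorem} follows.

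The one point requiring a little care — and the step I expect to be the only real obstacle — is confirming that the density statement is in the right topology, namely that one may approximate an arbitrary $f\in H_X^k(\Omega)$ by functions smooth \emph{up to the boundary} in the $\|.\|_{k,X}$ norm, not merely by functions in $H_X^k(\Omega)\cap C^\infty(\Omega)$. As remarked in the text, this is proved exactly as the analogous global approximation theorem for ordinary Sobolev spaces (Meyers--Serrin plus boundary straightening and mollification, cf.\ \cite[\S5.3.3]{Evans98}); the only modification is that one tracks powers of the single vector field $X$ through the local flattening and convolution rather than a full gradient, which is routine since $X$ has $C^\infty(\overline\Omega)$ coefficients and the commutators $[X,\partial/\partial x_i]$ again have such coefficients. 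Granting this, the argument above is complete, and nothing beyond $L^2$-boundedness of $B$ and global regularity of $B$ (hence the compactness of $N$) is used.
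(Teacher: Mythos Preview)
Your argument is correct and follows essentially the same route as the paper's own proof: density of $C^\infty(\overline\Omega)$ in $H_X^k(\Omega)$, global regularity of $B$ to ensure \eqref{E:apriori} applies to the approximants, a Cauchy-sequence argument in $H^k(\Omega)$, and identification of the $H^k$-limit with $Bf$ via $L^2$-continuity of $B$. Your identification step (uniqueness of $L^2$-limits) is in fact slightly cleaner than the paper's distributional argument, and your explicit passage to the limit in the inequality fills in a step the paper leaves implicit.
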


\begin{proof}
Suppose $g\in H_X^k(\Omega)$. Since $C^{\infty}(\overline{\Omega})$ is dense in $H_{X}^{k}(\Omega)$, there exists $\left\{f_j\right\}\subset C^{\infty}(\overline{\Omega})$ such that $\lim_{j\to\infty}\|f_j- g\|_{k,X} =0$; in particular, the sequence $\left\{f_j\right\}$ is Cauchy with respect to the norm 
$\|.\|_{k,X}$.  The proposition in \S2 in \cite{Kohn84} says that $Bf_j\in C^{\infty}(\overline{\Omega})$, so \eqref{E:apriori} is applicable.
Since \eqref{E:apriori} holds with constant independent of $f$,

$$\|Bf_l -Bf_m\|_k= \left\| B\left(f_l-f_m\right)\right\|\lesssim\|f_l-f_m\|_{k,X}$$
holds with constant independent of $l$ and $m$, which
implies that $\left\{Bf_j\right\}$ is a Cauchy sequence in $H^k(\Omega)$. Hence, there exists $F\in H^k(\Omega)$ such that $\lim_{j\to\infty}\|Bf_j- F\|_{k} =0$. We claim that $F=Bg$ in $H^{k}(\Omega)$. First note that
\begin{align*}
  \|F-Bg\|&\leq \|F-Bf_{j}\|+\|Bf_{j}-Bg\|\\
  &\lesssim \|F-Bf_{j}\|+\|f_{j}-g\|
\end{align*} 
holds, and hence $F=Bg$ in $L^{2}(\Omega)$. Moreover, since
\begin{align*}
  \left|(F-Bg,\psi)\right|\leq \|F-Bg\|\cdot\|\psi\|\quad\sjump\forall\sjump\psi\in C^{\infty}_{c}(\Omega),
\end{align*}
it then follows that $(F-Bg,\psi)=0$ for all $\psi\in C^{\infty}_{c}(\Omega)$.  Thus, all the derivatives of $Bg$ up to order $k$ exist in the distributional sense, and in fact equal those of $F$. Since $F\in H^{k}(\Omega)$, this implies that $F=Bg$ in $H^{k}(\Omega)$. 

\end{proof}

The collection of estimates \eqref{E:apriori} will be shown by induction on $k$. We start with the proof of
  \begin{align}\label{E:apriori1}
    \|Bf\|_{1}\lesssim \|f\|_{1,X}\qquad\sjump\forall\sjump f\in C^{\infty}(\overline{\Omega}),
  \end{align}  
i.e., the case $k=1$ in \eqref{E:apriori}, and break establishing \eqref{E:apriori1} into a few lemmas in order to separate the different elements of the proof.

\begin{lemma} For all $f\in C^{\infty}(\overline{\Omega})$, the inequality
\begin{align}\label{E:XBfreduction}
  \left\|Bf\right\|_1\lesssim \|XBf\| + \|f\|
\end{align}  
holds with a constant independent of $f$.
\end{lemma}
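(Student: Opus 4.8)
The plan is to apply Corollary \ref{C:basicSobolev1estimate}(i) to $g = Bf$. Since $Bf \in \mathcal{O}(\Omega) \cap C^\infty(\overline{\Omega})$ (using Kohn's regularity result that $B$ preserves $C^\infty(\overline{\Omega})$ under the compactness hypothesis), every component $\overline{L}_j Bf$ and $\overline{\mathcal{N}} Bf$ is a first-order anti-holomorphic derivative of a holomorphic function, hence vanishes identically. Thus Corollary \ref{C:basicSobolev1estimate}(i) collapses to
\begin{align*}
  \|Bf\|_1^2 \lesssim \|XBf\|^2 + \|Bf\|^2,
\end{align*}
and taking square roots gives $\|Bf\|_1 \lesssim \|XBf\| + \|Bf\|$.

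The only remaining point is to replace $\|Bf\|$ on the right-hand side by $\|f\|$. This is immediate from the fact that $B$ is an orthogonal projection on $L^2(\Omega)$, so $\|Bf\| \le \|f\|$. Combining the two observations yields \eqref{E:XBfreduction}.

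I expect no real obstacle here; the lemma is essentially a bookkeeping consequence of Corollary \ref{C:basicSobolev1estimate}(i) together with the two elementary facts that $Bf$ is holomorphic (killing all the barred-derivative and $\overline{\mathcal{N}}$-terms) and that $B$ is an $L^2$-contraction. The one thing to be careful about is ensuring $Bf \in C^\infty(\overline{\Omega})$ so that Corollary \ref{C:basicSobolev1estimate}(i), stated for $g \in C^\infty(\overline{\Omega})$, genuinely applies — but this is exactly the Kohn--Nirenberg global regularity theorem quoted in the introduction, valid precisely because $N$ is compact.

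Written out:

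\begin{proof}
Since $\Omega$ is pseudoconvex with compact $\dbar$-Neumann operator, Kohn's regularity result (the proposition in \S2 of \cite{Kohn84}) gives $Bf \in C^\infty(\overline{\Omega})$; moreover $Bf \in \mathcal{O}(\Omega)$. Applying Corollary \ref{C:basicSobolev1estimate}(i) with $g = Bf$, and using that $\overline{L}_j(Bf) = 0$ for $j = 1,\dots,n$ and $\overline{\cn}(Bf) = 0$ because $Bf$ is holomorphic, we obtain
\begin{align*}
  \|Bf\|_1^2 \lesssim \|XBf\|^2 + \|Bf\|^2.
\end{align*}
Since $B$ is the orthogonal projection of $L^2(\Omega)$ onto $A^2(\Omega)$, we have $\|Bf\| \le \|f\|$. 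Hence
\begin{align*}
  \|Bf\|_1^2 \lesssim \|XBf\|^2 + \|f\|^2,
\end{align*}
and taking square roots gives \eqref{E:XBfreduction}, with a constant independent of $f$.
\end{proof}
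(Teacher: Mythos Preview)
Your proof is correct and essentially identical to the paper's: the paper invokes part (ii) of Corollary \ref{C:basicSobolev1estimate} with $k=1$ (whose $k=1$ case is proved there precisely by applying part (i) and killing the barred terms via holomorphy, as you do), and then uses the $L^2$-boundedness of $B$. Your explicit remark that Kohn's regularity is needed to ensure $Bf\in C^\infty(\overline{\Omega})$ is a helpful clarification the paper leaves implicit at this point.
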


\begin{proof}
Since $Bf\in\mathcal{O}(\Omega)\cap C^{\infty}(\overline{\Omega})$, part (ii)  of Corollary \ref{C:basicSobolev1estimate} applies with $k=1$ so that 
 $$ \|Bf\|_{1}^{2}\lesssim\|XBf\|^{2}+\|Bf\|^{2}$$
 holds. The boundedness of $B$ in $L^{2}$ then yields \eqref{E:XBfreduction}.
  \end{proof}
  
To bring estimate \eqref{E:compact} to bear, set $\varphi =N\dbar f$ and note that $Bf=f-\dbarstar\varphi$ by Kohn's formula \eqref{E:Kohnsformula}. Also note that $\varphi\in \mathcal{D}^{0,1}(\Omega)$, since $N$ maps into $\text{Dom}(\dbar^*)$ and $N$ is globally regular.

\begin{lemma}\label{L:basicphiestimates} For $f\in C^{\infty}(\overline{\Omega})$ and $\varphi:=N\dbar f$, the inequalities

\begin{itemize}
  \item[(i)] $Q(\varphi,\varphi)\leq\|f\|^{2}$,
  \medskip
  \item[(ii)] $\|\varphi\|^{2}+\sum_{j=1}^{n}\|\overline{L}_{j}\varphi\|^{2}+\|\overline{\cn}\varphi\|^{2}\lesssim\|f\|^{2}$,
  \medskip
  \item[(iii)] $\|\varphi\|_{1}^{2}\lesssim\|X\varphi\|^{2}+\|f\|^{2}$
\end{itemize}
hold with constants independent of $f$.
\end{lemma}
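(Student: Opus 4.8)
The plan is to prove the three estimates in order, each building on the previous one, and feeding into the standard $\dbar$-Neumann machinery recalled in Section~\ref{S:Preliminaries}. For (i): since $\varphi = N\dbar f$, Kohn's formula gives $\dbarstar\varphi = \dbarstar N\dbar f = f - Bf$, and moreover $\dbar\varphi = \dbar N\dbar f$; because $N$ commutes with $\dbar$ on its domain and $\dbar(\dbar f)=0$ (as $\dbar f$ is a $(0,1)$-form and $\dbar\dbar=0$), one gets $\dbar\varphi = 0$. Hence $Q(\varphi,\varphi) = \|\dbar\varphi\|^2 + \|\dbarstar\varphi\|^2 = \|f-Bf\|^2 = \|f\|^2 - \|Bf\|^2 \leq \|f\|^2$, using that $Bf$ is the orthogonal projection of $f$ so $f-Bf \perp Bf$. (Alternatively, $Q(\varphi,\varphi) = (\Box\varphi,\varphi) = (\dbar f,\varphi) = (\dbar f, N\dbar f) \leq \|\dbar f\|\cdot\|\varphi\| \leq \|f\|_1\cdot\ldots$ — but the orthogonality argument is cleaner and avoids an $H^1$ norm on $f$.)

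For (ii): the term $\|\varphi\|^2$ is controlled by $Q(\varphi,\varphi) \leq \|f\|^2$ directly from the basic estimate \eqref{E:basicestimate} (which also bounds $\sum_k\|\partial\varphi/\partial\bar z_k\|^2$). To handle $\sum_j\|\overline{L}_j\varphi\|^2$ and $\|\overline{\cn}\varphi\|^2$, I would express both $\overline{L}_j$ and $\overline{\cn}$ in terms of the $\partial/\partial\bar z_k$'s: by \eqref{E:L_k}, $\overline{L}_k = \partial/\partial\bar z_k - \chi\, r_{\bar z_k}\overline{\cn}$ with $\overline{\cn} = \sum_j r_{z_j}\partial/\partial\bar z_j$, so every barred derivative here is a $C^\infty(\overline\Omega)$-linear combination of $\partial/\partial\bar z_1,\dots,\partial/\partial\bar z_n$ applied componentwise. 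Therefore $\sum_j\|\overline{L}_j\varphi\|^2 + \|\overline{\cn}\varphi\|^2 \lesssim \sum_{k=1}^n\|\partial\varphi/\partial\bar z_k\|^2 + \|\varphi\|^2 \lesssim Q(\varphi,\varphi) \leq \|f\|^2$ by \eqref{E:basicestimate}. One small point to verify is that \eqref{E:basicestimate} applies to $\varphi$, i.e.\ that $\varphi\in\mathcal{D}^{0,1}(\Omega)$ — this was already noted in the text preceding the lemma (since $N$ is globally regular and maps into $\text{Dom}(\dbarstar)$).

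For (iii): by Corollary~\ref{C:basicSobolev1estimate}(i) applied componentwise to $\varphi$ (whose components are in $C^\infty(\overline\Omega)$ because $N$ is globally regular and $f$ is smooth), $\|\varphi\|_1^2 \lesssim \|X\varphi\|^2 + \|\varphi\|^2 + \sum_j\|\overline{L}_j\varphi\|^2 + \|\overline{\cn}\varphi\|^2$, and the last three groups of terms are $\lesssim\|f\|^2$ by part (ii). This gives $\|\varphi\|_1^2 \lesssim \|X\varphi\|^2 + \|f\|^2$ immediately. I do not anticipate a genuine obstacle in this lemma — it is essentially a bookkeeping exercise assembling \eqref{E:basicestimate}, Kohn's formula, and Corollary~\ref{C:basicSobolev1estimate}; the only thing requiring a moment's care is the verification that $\varphi$ has the regularity ($\varphi\in\mathcal{D}^{0,1}(\Omega)$ with smooth components) needed to legitimately apply \eqref{E:basicestimate} and Corollary~\ref{C:basicSobolev1estimate}, which rests on the global regularity of $N$ under the compactness hypothesis. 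The real difficulty of the paper lies later — in bounding $\|X\varphi\|$ (equivalently $\|T\varphi\|$) by $\|f\|_{1,X}$ via the compactness estimate \eqref{E:compact} — not in this preparatory lemma.
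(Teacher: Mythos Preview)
Your proposal is correct and follows essentially the same approach as the paper's proof: (i) via $Q(\varphi,\varphi)=\|f-Bf\|^2$ and orthogonality of $B$, (ii) via the basic estimate \eqref{E:basicestimate}, and (iii) via Corollary~\ref{C:basicSobolev1estimate}(i) applied componentwise combined with (ii). Your write-up simply makes explicit a few details (e.g.\ $\dbar\varphi=0$, and expressing $\overline{L}_j,\overline{\cn}$ as smooth combinations of the $\partial/\partial\bar z_k$) that the paper leaves implicit.
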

\begin{proof}
  Part (i) follows from
  \begin{align*}
    Q(\varphi,\varphi)=\|\dbar\varphi\|^{2}+\|\dbarstar\varphi\|^{2}=\|f-Bf\|^{2}
  \end{align*}
  and the orthogonality of $B$.
  Since (i) holds, (ii) follows from \eqref{E:basicestimate}. Lastly, applying part (i) of Corollary  \ref{C:basicSobolev1estimate} to each component of $\varphi$ and using (ii) yields (iii).
\end{proof}

The vector field $X$ does not preserve membership in $\mathcal{D}^{0,1}(\Omega)$. Thus a slight modification of $X$ is needed before proceeding with the proof of \eqref{E:apriori1}. 

\begin{lemma}\label{L:T} There exists an operator $T$ such that
    \begin{itemize}
    \item[(i)] $T u\in\mathcal{D}^{0,1}(\Omega)$ 
    whenever $u\in\mathcal{D}^{0,1}(\Omega)$, and
    \item[(ii)] $\| (X-T)u\|_{i}\lesssim \|u\|_{i}$ for $u\in\Lambda^{0,1}(\overline{\Omega})$ for $i\in\{0,1\}$.
    \end{itemize}
    \end{lemma}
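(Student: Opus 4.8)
The plan is to build $T$ from $X$ by subtracting off a correction term that kills the boundary obstruction to lying in $\text{Dom}(\dbarstar)$. Recall that a $(0,1)$-form $u=\sum_j u_j\,d\bar z_j$ lies in $\mathcal{D}^{0,1}(\Omega)$ precisely when $\sum_j r_{z_j}u_j=0$ on $b\Omega$. Writing $X = \cn - \overline{\cn}$ with $\cn=\sum_j r_{\bar z_j}\partial_{z_j}$ and $\overline{\cn}=\sum_j r_{z_j}\partial_{\bar z_j}$, I would let $X$ act on $u$ componentwise and compute $\sum_j r_{z_j}(Xu)_j$ on $b\Omega$. This will not vanish in general; the obstruction is a first-order expression in the $u_j$'s (coming from $X$ differentiating the $u_j$) plus a zeroth-order term (coming from $X$ hitting the coefficients $r_{z_j}$). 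The first observation is that the genuinely first-order part of this obstruction vanishes on $b\Omega$ whenever $u\in\text{Dom}(\dbarstar)$: indeed $X\bigl(\sum_j r_{z_j}u_j\bigr)=0$ on $b\Omega$ because $X$ is tangent to $b\Omega$ and $\sum_j r_{z_j}u_j\equiv 0$ there, and expanding this by the product rule isolates $\sum_j r_{z_j}(Xu)_j$ as equal, on $b\Omega$, to $-\sum_j (X r_{z_j})u_j$, a zeroth-order term in $u$.

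So the remaining obstruction $\sigma(u):=\sum_j (Xr_{z_j})u_j$ is a $C^{\infty}(\overline\Omega)$ linear combination of the components $u_j$ only. I would then define
\begin{align*}
  Tu \;=\; Xu \;+\; \chi\,\sigma(u)\sum_{k=1}^{n} \overline{r_{z_k}}\, d\bar z_k,
\end{align*}
where $\chi\in C^\infty(\overline\Omega)$ equals $(\sum_j|r_{z_j}|^2)^{-1}$ near $b\Omega$ as in \eqref{E:L_k}. Then on $b\Omega$,
\begin{align*}
  \sum_k r_{z_k}(Tu)_k = \sum_k r_{z_k}(Xu)_k + \chi\,\sigma(u)\sum_k |r_{z_k}|^2 = -\sigma(u) + \sigma(u) = 0,
\end{align*}
using $\chi\sum_k|r_{z_k}|^2=1$ near $b\Omega$ and the identity from the previous paragraph; hence $Tu\in\mathcal{D}^{0,1}(\Omega)$, giving (i). (If one prefers an overkill version valid a bit further into $\Omega$, multiply the correction by a cutoff supported near $b\Omega$; only the boundary values matter for membership in $\text{Dom}(\dbarstar)$.) For (ii), note $(X-T)u = -\chi\,\sigma(u)\sum_k\overline{r_{z_k}}\,d\bar z_k$ is a zeroth-order operator applied to $u$ with coefficients in $C^{\infty}(\overline\Omega)$, so $\|(X-T)u\|_i\lesssim\|u\|_i$ for $i=0,1$ is immediate from the Leibniz rule and boundedness of smooth coefficients in Sobolev norms.

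I expect the only real subtlety to be bookkeeping: verifying carefully that $\sum_j r_{z_j}(Xu)_j = -\sum_j(Xr_{z_j})u_j$ on $b\Omega$ — i.e. that $X$ differentiating the $u_j$'s produces, after contracting against $r_{z_j}$, only boundary terms already accounted for — which is exactly the assertion that $X$ is tangent to $b\Omega$ combined with $u\in\text{Dom}(\dbarstar)$. Everything else (the choice of correction term, the cancellation, the zeroth-order estimate) is then routine. One should also remark that $T$ so defined is a first-order differential operator on $(0,1)$-forms with smooth coefficients, so it maps $\Lambda^{0,1}(\overline\Omega)$ to itself and $H^1$-type bounds for $T$ follow from those for $X$; this is what makes $T$ a usable substitute for $X$ in the integration-by-parts arguments of Section \ref{S:Proof}.
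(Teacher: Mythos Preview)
Your proposal is correct and essentially identical to the paper's proof: the paper also uses tangentiality of $X$ to rewrite $\sum_j r_{z_j}(Xu_j)=-\sum_j X(r_{z_j})u_j$ on $b\Omega$ and then defines $Tu=Xu+\dbar r\cdot\chi\sum_j X(r_{z_j})u_j$, which is exactly your formula since $\sum_k\overline{r_{z_k}}\,d\bar z_k=\dbar r$ for real $r$. The verification of (i) and the zeroth-order estimate for (ii) are done in the same way.
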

  
  \begin{proof}
  Since $X$ is tangential to $b\Omega$, it follows from \eqref{E:domaindbarstar} that
  \begin{align*}
   \sum_{j=1}^{n}r_{z_{j}}Xu_{j}=-\sum_{j=1}^{n}X(r_{z_{j}})u_{j}\qquad\sjump\forall\sjump u\in\mathcal{D}^{0,1}(\Omega)
  \end{align*}
  holds on $b\Omega$. Thus, defining
  \begin{align}\label{D:T}
    Tu:=Xu+\dbar r\cdot\chi\sum_{j=1}^{n}X(r_{z_{j}})u_{j}\qquad\sjump\forall\sjump u\in\Lambda^{0,1}(\overline{\Omega})
  \end{align}
  for $\chi$ defined as in \eqref{E:L_k},
  yields that $Tu\in\mathcal{D}^{0,1}(\Omega)$ whenever $u\in\mathcal{D}^{0,1}(\Omega)$. Moreover, on the first order level $T$ acts componentwise on $u$  and equals $X$. Since
\begin{align*}
 \|(X-T)u\|_{i}=\sum_{k=1}^{n}\Bigl\|r_{\bar{z}_{k}}\chi\sum_{j=1}^{n}X(r_{z_{j}})u_{j}\Bigr\|_{i}\lesssim\|u\|_{i}
\end{align*}
clearly holds for $i\in\{0,1\}$, the proof is complete. 
\end{proof}

The next result shows that only a single directional derivative of $\varphi$ is needed to control $XBf$.

\begin{lemma}\label{L:XBfbyTphi} The inequality
\begin{align}\label{E:XBfTphi}
   \|XBf\|^{2}&\lesssim \|T\varphi\|^{2}+ \|f\|_{1,X}^{2}
\end{align}
holds, with constant independent of $f\in C^{\infty}(\overline{\Omega})$.
\end{lemma}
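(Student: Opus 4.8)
\textbf{Proof proposal for Lemma \ref{L:XBfbyTphi}.}
The plan is to expand the inner product $\|XBf\|^2 = (XBf, XBf)$ on only \emph{one} side using Kohn's formula $Bf = f - \dbarstar\varphi$, so that
\[
  \|XBf\|^2 = (XBf, Xf) - (XBf, X\dbarstar\varphi).
\]
The first term is immediately controlled: by Cauchy--Schwarz and the $(sc)$-$(lc)$ inequality it is bounded by $\epsilon\|XBf\|^2 + C_\epsilon\|Xf\|^2$, and $\|Xf\| \le \|f\|_{1,X}$; the $\epsilon\|XBf\|^2$ term will be absorbed on the left at the end. So the work is in the second term, $(XBf, X\dbarstar\varphi)$. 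Here I would replace $X$ acting on the form $\dbarstar\varphi$ by $T$ (using Lemma \ref{L:T}(ii): the difference $(X-T)\dbarstar\varphi$ is a zeroth-order operator on $\dbarstar\varphi$, hence controlled by $\|\dbarstar\varphi\| \le \|f-Bf\|\lesssim\|f\|$), reducing matters to estimating $(XBf, T\dbarstar\varphi)$.

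Next I would commute $T$ past $\dbarstar$. Since $T$ differs from $X$ by a componentwise zeroth-order term and $X$ is a first-order vector field, the commutator $[T,\dbarstar]$ is a first-order operator in the components, so $T\dbarstar\varphi = \dbarstar T\varphi + [T,\dbarstar]\varphi$, and $\|[T,\dbarstar]\varphi\| \lesssim \|\varphi\|_1$. By Lemma \ref{L:basicphiestimates}(iii), $\|\varphi\|_1^2 \lesssim \|X\varphi\|^2 + \|f\|^2$, and since $X$ and $T$ agree to first order (componentwise), $\|X\varphi\| \lesssim \|T\varphi\| + \|\varphi\| \lesssim \|T\varphi\| + \|f\|$. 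Thus the commutator contribution is absorbed into $\|T\varphi\|^2 + \|f\|^2 \le \|T\varphi\|^2 + \|f\|_{1,X}^2$. This leaves the main term $(XBf, \dbarstar T\varphi)$, which I would integrate by parts: moving $\dbarstar$ onto $XBf$ gives a $\dbar$ acting on $XBf$, plus a boundary term. The boundary term vanishes because $T\varphi \in \mathrm{Dom}(\dbarstar)$ by Lemma \ref{L:T}(i), which is precisely the reason $T$ was introduced. So
\[
  (XBf, \dbarstar T\varphi) = (\dbar X Bf, T\varphi).
\]

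Now the key point: $Bf$ is holomorphic, so $\dbar Bf = 0$, hence $\dbar X Bf = [\dbar, X]Bf$, a first-order operator applied to the holomorphic function $Bf$. Writing out $[\dbar, X]Bf$ in components, each component is a first-order derivative of $Bf$ with smooth coefficients; using $\frac{\partial Bf}{\partial\bar z_k} = 0$ and expressing $\frac{\partial}{\partial z_k} = L_k + \chi r_{z_k}\overline{\cn}$, each such derivative is controlled by $\sum_j\|L_j Bf\| + \|\overline{\cn} Bf\| + \|Bf\|$. Here I invoke Proposition \ref{P:basicLestimate} with $g = Bf$: since $\overline{L}_j Bf = 0$ (holomorphy) and $\overline{\cn}Bf$ is, up to the factor $r$ near the boundary, handled because... actually the cleaner route is to use Corollary \ref{C:basicSobolev1estimate}(ii) with $k=1$ directly, giving $\|Bf\|_1^2 \lesssim \|XBf\|^2 + \|Bf\|^2$, so $\|[\dbar,X]Bf\| \lesssim \|Bf\|_1 \lesssim \|XBf\| + \|f\|$. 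Therefore $|(\dbar XBf, T\varphi)| \lesssim (\|XBf\| + \|f\|)\|T\varphi\|$, and one more application of $(sc)$-$(lc)$ splits this as $\epsilon\|XBf\|^2 + C_\epsilon(\|T\varphi\|^2 + \|f\|^2)$. Collecting all terms, choosing $\epsilon$ small, and absorbing the $\epsilon\|XBf\|^2$ contributions on the left yields \eqref{E:XBfTphi}.

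\textbf{Main obstacle.} The delicate step is the integration by parts $(XBf,\dbarstar T\varphi) = (\dbar XBf, T\varphi)$: one must be sure the boundary term genuinely vanishes, which requires $T\varphi \in \mathrm{Dom}(\dbarstar)$ (guaranteed by Lemma \ref{L:T}(i)) \emph{and} that $XBf$ as the other factor produces no boundary contribution — it does not, since the boundary integral in Green's formula for $\dbarstar$ pairs $XBf$ against $\sum_j r_{z_j}(T\varphi)_j = 0$. The secondary subtlety is bookkeeping the various zeroth- and first-order commutator error terms and verifying each is dominated by $\|T\varphi\|^2 + \|f\|_{1,X}^2$ after feeding them through Lemma \ref{L:basicphiestimates} and Corollary \ref{C:basicSobolev1estimate}; none of these individually is hard, but one must be careful that every derivative of $\varphi$ is re-expressed in terms of $X\varphi$ (equivalently $T\varphi$) and $\|f\|$ rather than a bare $\|\varphi\|_1$ that has not yet been estimated.
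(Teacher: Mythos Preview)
Your proposal is correct and follows essentially the same route as the paper: expand one factor via Kohn's formula, commute the tangential derivative past $\dbarstar$, switch to $T$ so that $T\varphi\in\mathrm{Dom}(\dbarstar)$ and integration by parts is legitimate, then use $\dbar XBf=[\dbar,X]Bf$ together with $\|Bf\|_1\lesssim\|XBf\|+\|f\|$ and $\|\varphi\|_1^2\lesssim\|T\varphi\|^2+\|f\|^2$ to close.

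One small point of order: you write ``replace $X$ acting on the form $\dbarstar\varphi$ by $T$'' and then speak of $[T,\dbarstar]$, but $T$ is defined only on $(0,1)$-forms (the correction term in \eqref{D:T} requires the components $u_j$), while $\dbarstar\varphi$ is a function. The paper handles this by first commuting $X$ past $\vartheta$ on functions---writing $X\dbarstar\varphi=[X,\vartheta]\varphi+\vartheta X\varphi$---and only then replacing $X\varphi$ by $T\varphi$, where $T$ is genuinely defined. Your argument goes through with this reordering; the estimates you claim for the resulting error terms are exactly the ones the paper uses.
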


\begin{proof}
Since
\begin{align*}
  \|XBf\|^{2}=(Xf,XBf)-(X\dbarstar\varphi,XBf),
\end{align*}
the (sc)-(lc) inequality gives
\begin{align}\label{E:XBfXphi}
  \|XBf\|^{2}&\lesssim \|Xf\|^{2}+\left|(X\dbarstar\varphi,XBf) \right|\notag\\
  &\leq\left|\left([X,\vartheta]\varphi , XBf\right) \right|+\left|\left(\vartheta X\varphi,XBf\right)  \right|+\|Xf\|^{2}\notag\\
  &\lesssim \|\varphi\|_{1}\cdot\|XBf\|+\left|\left(\vartheta X\varphi,XBf\right)  \right|+\|Xf\|^{2}.
\end{align}
Furthermore,
\begin{align*}
  \left|\left(\vartheta X\varphi,XBf\right)  \right|&\leq\left|\left(\dbarstar T\varphi,XBf \right) \right|
  +\left|\left(\vartheta\left((X-T)\varphi\right),XBf \right) \right|\\
  &\lesssim \left|(T\varphi,\dbar XBf \right|+\left\|(X-T)\varphi \right\|_{1}\cdot\|XBf\|\\
  &\lesssim \left|\left(T\varphi,[\dbar,X]Bf\right)\right|+\|\varphi\|_{1}\cdot\|XBf\|,
\end{align*}
where the last step follows from $Bf\in\mathcal{O}(\Omega)$ and (ii) of Lemma \ref{L:T}. Inequality \eqref{E:XBfreduction} then implies 
\begin{align*}
  \left|\left(\vartheta X\varphi,XBf\right)  \right|
  \lesssim
  \|\varphi\|_{1}\cdot\left(\|XBf\|+\|f\|\right).
\end{align*}
Substituting this into \eqref{E:XBfXphi} yields
\begin{align*}
   \|XBf\|^{2}&\lesssim \|\varphi\|_{1}\cdot\left(\|XBf\|+\|f\|\right)+ \|f\|_{1,X}^{2}.
\end{align*}
Since (iii) of Lemma \ref{L:basicphiestimates} together with (ii) of Lemma \ref{L:T} imply that
\begin{align*}
  \|\varphi\|_{1}^{2}\lesssim \|T\varphi\|^{2}+\|f\|^{2},
\end{align*}
the desired \eqref{E:XBfTphi} follows.
\end{proof}

To complete the proof of \eqref{E:apriori1}, it remains to establish the following

\begin{lemma} For $f\in C^{\infty}(\overline{\Omega})$ and $\varphi=N\dbar f$, the inequality
\begin{equation}\label{E:Tphibyf_X}
\|T\varphi\|^{2}\lesssim \|f\|_{1,X}^{2}
\end{equation}
holds with a constant independent of $f$.
\end{lemma}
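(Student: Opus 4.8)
The strategy is to expand $\|T\varphi\|^2 = (T\varphi, T\varphi)$ and integrate by parts, pushing one copy of $T$ across the inner product and using the compactness estimate \eqref{E:compact} to absorb the resulting $Q$-term. Since $T\varphi \in \mathcal{D}^{0,1}(\Omega)$ by (i) of Lemma \ref{L:T}, the basic estimate \eqref{E:basicestimate} and the compactness estimate \eqref{E:compact} are both available for $u = T\varphi$. First I would write
\begin{align*}
  \|T\varphi\|^2 \leq \epsilon\, Q(T\varphi, T\varphi) + C(\epsilon)\,|T\varphi|_{-1}^2,
\end{align*}
and observe that $|T\varphi|_{-1}^2 \lesssim \|\varphi\|^2 \lesssim \|f\|^2$ by (ii) of Lemma \ref{L:basicphiestimates}, so the low-order term is already controlled. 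It remains to bound $\epsilon\,Q(T\varphi,T\varphi) = \epsilon(\|\dbar T\varphi\|^2 + \|\dbarstar T\varphi\|^2)$ by (a small multiple of $\|T\varphi\|^2$ that can be absorbed on the left) plus $\|f\|_{1,X}^2$.

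\textbf{Commuting $T$ past $\dbar$ and $\dbarstar$.} The key step is to write $\dbar T\varphi = T\dbar\varphi + [\dbar, T]\varphi$ and $\dbarstar T\varphi$ similarly, with $\vartheta$ in place of $\dbarstar$ for the distributional computation. Here I use the special relationship between $\varphi$ and $f$: since $\varphi = N\dbar f$, one has $\dbar\varphi = 0$ (because $\dbar\dbar f = 0$ and $N$ commutes appropriately — more precisely $\dbar\varphi = \dbar N\dbar f$, and $\Box\varphi = \dbar f$ forces $\dbar\dbarstar\dbar\varphi = 0$, hence $\dbar\varphi = 0$) and $\dbarstar\varphi = f - Bf$. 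Thus $\dbar T\varphi = [\dbar, T]\varphi$ and $\vartheta T\varphi = T(f - Bf) + [\vartheta, T]\varphi$. The commutators $[\dbar, T]$ and $[\vartheta, T]$ are first-order operators acting on the components of $\varphi$; modulo zeroth-order terms their coefficients involve derivatives of $r$ and $\chi$, so $\|[\dbar, T]\varphi\| + \|[\vartheta,T]\varphi\| \lesssim \|\varphi\|_1$. By (iii) of Lemma \ref{L:basicphiestimates} and (ii) of Lemma \ref{L:T}, $\|\varphi\|_1^2 \lesssim \|T\varphi\|^2 + \|f\|^2$. Meanwhile $\|T(f - Bf)\| \lesssim \|Xf\| + \|XBf\| + \|f\|$ by (ii) of Lemma \ref{L:T}, and $\|XBf\| \lesssim \|T\varphi\| + \|f\|_{1,X}$ via Lemma \ref{L:XBfbyTphi} (using $\|XBf\|^2 \lesssim \|T\varphi\|^2 + \|f\|_{1,X}^2$). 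Assembling these,
\begin{align*}
  Q(T\varphi, T\varphi) \lesssim \|T\varphi\|^2 + \|f\|_{1,X}^2,
\end{align*}
so choosing $\epsilon$ small enough to absorb the $\epsilon\|T\varphi\|^2$ term on the left yields \eqref{E:Tphibyf_X}.

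\textbf{Where the care is needed.} The routine parts are the commutator bookkeeping and the (sc)-(lc) absorptions; the subtle point — and the one place I would slow down — is verifying that $\dbar\varphi = 0$ rigorously and that the integration by parts producing $\|\dbarstar T\varphi\|^2 \leq |(\vartheta T\varphi, \dbarstar T\varphi)| = |((f-Bf) + \text{commutator}, \dbarstar T\varphi)|$ is legitimate, i.e.\ that $T\varphi$ genuinely lies in $\mathrm{Dom}(\dbarstar)$ so that $(\dbarstar T\varphi, \psi) = (T\varphi, \dbar\psi)$ with no boundary term. This is exactly what Lemma \ref{L:T}(i) is designed to guarantee, but one must also check that $\dbarstar\dbar T\varphi$ pairs correctly, i.e.\ that $\dbar T\varphi \in \mathrm{Dom}(\dbarstar)$ is not needed — only $T\varphi \in \mathrm{Dom}(\dbarstar)$ and $\dbar\varphi = 0$ are used, since $\dbar T\varphi = [\dbar,T]\varphi$ is estimated directly by $\|\varphi\|_1$ without further integration by parts. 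A second delicate point is tracking the constant's independence of $f$ through the chain of lemmas, since Lemma \ref{L:XBfbyTphi} already contains $\|T\varphi\|^2$ on its right side; one must ensure the final absorption does not become circular, which it does not because the $\|T\varphi\|^2$ contributions all carry a factor that can be made small by the choice of $\epsilon$ (or, for the non-$\epsilon$ ones, are multiplied against $\|f\|$ and handled by (sc)-(lc)).
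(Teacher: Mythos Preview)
Your approach is essentially the paper's: apply the compactness estimate to $T\varphi$, control the $-1$ norm by $\|f\|^2$, show $Q(T\varphi,T\varphi)\lesssim \|T\varphi\|^2+\|f\|_{1,X}^2$ via the relations $\dbar\varphi=0$, $\dbarstar\varphi=f-Bf$, and Lemma~\ref{L:XBfbyTphi}, then absorb. One technical point needs repair: $T$ is defined only on $(0,1)$-forms (see \eqref{D:T}), so the expressions $T\dbar\varphi$ and $T(f-Bf)$ in your commutator identities are not well-defined --- $\dbar\varphi$ is a $(0,2)$-form and $f-Bf$ is a function. The paper handles this exactly by switching back to $X$ for the commutation step, writing $\|\dbar T\varphi\|^2\lesssim\|\dbar X\varphi\|^2+\|(X-T)\varphi\|_1^2$ and similarly for $\dbarstar$, and then using (ii) of Lemma~\ref{L:T} to pass between $X$ and $T$ on $(0,1)$-forms. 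With that substitution your argument matches the paper's line for line.
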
 

\begin{proof} For any $\epsilon >0$, \eqref{E:compact} implies

\begin{align}\label{E:startcompact}
\left\| T\varphi\right\|^{2}&\leq\epsilon\,Q\left(T\varphi,T\varphi\right)+ C(\epsilon)\left|T\varphi\right|_{-1}^2\notag \\
&\leq\epsilon\,Q\left(T\varphi,T\varphi\right)+\tilde C_1(\epsilon)\|\varphi\|^2\notag \\
&\leq\epsilon\left[\left\|\dbar T\varphi\right\|^{2}+ \left\|\dbarstar T\varphi\right\|^{2}\right]+\tilde C_2(\epsilon)\|f\|^2,
\end{align}
where Lemma \ref{L:basicphiestimates} (ii) is applied in the last line. The two terms in the bracket expression will be estimated separately.

The operator $T$ has only been defined on $(0,1)$-forms. Thus, to commute $\dbar$ (and $\dbarstar$) by the leading term of $T$, we switch back to $X$. That is,
  $$\|\dbar T\varphi\|^{2}\lesssim \|\dbar X\varphi\|^{2}+\|\dbar\left((X-T)\varphi\right)\|^{2}.$$
By part (ii) of Lemma \ref{L:T} it follows that
$$\|\dbar\left((X-T)\varphi\right)\|^{2}\lesssim\|\varphi\|_{1}^{2},$$
while $\dbar\varphi=0$ gives
$$\|\dbar X\varphi\|^{2}=\|[\dbar,X]\varphi\|^{2}\lesssim\|\varphi\|_{1}^{2}.$$
It then follows from (iii) of Lemma \ref{L:basicphiestimates} that
\begin{equation}\label{E:dbarcompact}
\left\|\dbar T\varphi\right\|^2\lesssim \left\|X\varphi\right\|^2 + \|f\|^2.
\end{equation}

\medskip
The estimates when commuting (the leading term of) $T$ past $\dbarstar$ are slightly different:

\begin{align*}
  \|\dbarstar T\varphi\|^{2}&\lesssim \|\vartheta X\varphi\|^{2}+\|\vartheta(X-T)\varphi\|^{2}\\
  &\lesssim\|\vartheta X\varphi\|^{2}+\|\varphi\|_{1}^{2},
\end{align*}
where the second estimate follows from part (ii) of Lemma \ref{L:T}.  Commuting $X$ by $\vartheta$ yields
$$\|\vartheta X\varphi\|^{2}\lesssim\|[\vartheta,X]\varphi\|^{2}+\|X\vartheta\varphi\|^{2}\lesssim\|\varphi\|_{1}^{2}+\|X\dbarstar\varphi\|^{2}$$
so that
\begin{align}\label{E:dbarstarcompact}
  \|\dbarstar T\varphi\|^{2}&\lesssim\|\varphi\|_{1}^{2}+\|X(f-Bf)\|^{2}\notag\\
  &\lesssim \|X\varphi\|^{2}+\|f\|^{2}+\|X(f-Bf)\|^{2}
\end{align}
by (iii) of Lemma \ref{L:basicphiestimates}.
For the last term in \eqref{E:dbarstarcompact},
 Lemma \ref{L:XBfbyTphi}  implies
 $$ \left\|Xf\right\|^2+\left\|XBf\right\|^2 
  \lesssim \left\|T\varphi\right\|^2+\|f\|^2_{1,X}.$$
\medskip

Combining \eqref{E:dbarcompact} and \eqref{E:dbarstarcompact} and using (ii) of Lemma \ref{L:T} to re-write the terms involving $X\varphi$, we have
$$ Q\left( T\varphi, T\varphi\right)\lesssim\left\|T\varphi\right\|^2+\|f\|^2_{1,X}.$$
Returning to \eqref{E:startcompact} and choosing $\epsilon$ small enough to absorb $\left\|T\varphi\right\|^2$ into the left hand side gives the claimed estimate \eqref{E:Tphibyf_X}.
\end{proof}

\medskip

\medskip

It remains to show that \eqref{E:apriori} holds for $k>1$. Let $k>1$ be a fixed integer, and suppose that
\begin{align}\label{E:inductionhypo}
  \left\|Bf\right\|_{\ell}\lesssim\left\|f\right\|_{\ell,X}\qquad\sjump\forall\sjump f\in C^{\infty}(\overline{\Omega})
\end{align}
holds for all $\ell\leq k-1$. The initial step of the proof that
\begin{align}\label{E:ktoprove}
   \left\|Bf\right\|_{k}\lesssim\left\|f\right\|_{k,X}\qquad\sjump\forall\sjump f\in C^{\infty}(\overline{\Omega})
\end{align}
holds is the same as when $k=1$.

\begin{lemma}\label{L:XkBfreduction}
  For all $f\in C^{\infty}(\overline{\Omega})$ the inequalities
  \begin{align*}
    \left\|Bf\right\|_{k}\lesssim\left\|Bf\right\|_{k,X}\lesssim\left\|X^{k}Bf\right\|+\|f\|_{k-1,X}
  \end{align*}
  hold with constants independent of $f$.
\end{lemma}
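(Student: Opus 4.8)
The plan is to prove Lemma~\ref{L:XkBfreduction} in two moves, mirroring the $k=1$ argument (Lemma stated via \eqref{E:XBfreduction}) but now working with the full $X$-Sobolev scale. The first inequality, $\|Bf\|_{k}\lesssim\|Bf\|_{k,X}$, is immediate from part (ii) of Corollary~\ref{C:basicSobolev1estimate} applied to $h=Bf\in\mathcal{O}(\Omega)\cap C^{\infty}(\overline{\Omega})$: that corollary gives $\|Bf\|_{k}^{2}\lesssim\sum_{\ell=0}^{k}\|X^{\ell}Bf\|^{2}=\|Bf\|_{k,X}^{2}$ directly. So the content of the lemma is the second inequality, $\|Bf\|_{k,X}\lesssim\|X^{k}Bf\|+\|f\|_{k-1,X}$, i.e.\ bounding the lower-order $X$-derivatives $\|X^{\ell}Bf\|$ for $\ell\le k-1$ by $\|f\|_{k-1,X}$.

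For that second step, the idea is to feed the induction hypothesis \eqref{E:inductionhypo} back in. For $\ell\le k-1$ we have $\|X^{\ell}Bf\|\le\|Bf\|_{\ell,X}$, and since $X^{\ell}$ is a differential operator of order $\ell$ we also have $\|Bf\|_{\ell,X}\lesssim\|Bf\|_{\ell}$ (the $X$-derivatives of order $\le\ell$ are linear combinations, with $C^{\infty}(\overline{\Omega})$ coefficients, of ordinary derivatives of order $\le\ell$). The induction hypothesis \eqref{E:inductionhypo} with this value of $\ell$ then gives $\|Bf\|_{\ell}\lesssim\|f\|_{\ell,X}\le\|f\|_{k-1,X}$. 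Summing over $\ell=0,\dots,k-1$ yields $\sum_{\ell=0}^{k-1}\|X^{\ell}Bf\|\lesssim\|f\|_{k-1,X}$, and hence
\begin{align*}
  \|Bf\|_{k,X}^{2}=\sum_{\ell=0}^{k}\|X^{\ell}Bf\|^{2}\lesssim\|X^{k}Bf\|^{2}+\|f\|_{k-1,X}^{2},
\end{align*}
which is the claimed bound after taking square roots (using $\sqrt{a^2+b^2}\le a+b$ for $a,b\ge 0$).

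I do not expect a genuine obstacle here; the lemma is a bookkeeping step that isolates the top-order term $\|X^{k}Bf\|$, exactly as \eqref{E:XBfreduction} did when $k=1$ (there the role of "$\|f\|_{k-1,X}$" was played by $\|Bf\|\lesssim\|f\|$ via $L^{2}$-boundedness of $B$, which is the $k=1$ instance of the induction hypothesis). The one point requiring a line of care is the comparison $\|Bf\|_{\ell,X}\lesssim\|Bf\|_{\ell}$: this is not the reverse inequality from Corollary~\ref{C:basicSobolev1estimate}(ii) but the trivial direction, valid for \emph{any} smooth function, since $X=\sum_{j}(r_{\bar z_j}\partial_{z_j}-r_{z_j}\partial_{\bar z_j})$ has coefficients in $C^{\infty}(\overline{\Omega})$, so iterating, $X^{\ell}$ expands as a finite sum of terms $a_{\alpha}D^{\alpha}$ with $|\alpha|\le\ell$ and $a_{\alpha}\in C^{\infty}(\overline{\Omega})$, whence $\|X^{\ell}g\|\lesssim\|g\|_{\ell}$. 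With this in hand the proof is complete.
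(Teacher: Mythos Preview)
Your proof is correct and follows essentially the same approach as the paper. The paper's own proof is only two sentences—the first inequality is attributed to Corollary~\ref{C:basicSobolev1estimate}(ii) and the second to the induction hypothesis~\eqref{E:inductionhypo}—and your argument simply unpacks the latter step, making explicit the chain $\|X^{\ell}Bf\|\lesssim\|Bf\|_{\ell}\lesssim\|f\|_{\ell,X}\le\|f\|_{k-1,X}$ for $\ell\le k-1$ via the trivial-direction bound $\|X^{\ell}g\|\lesssim\|g\|_{\ell}$.
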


\begin{proof}
  The first inequality holds by part (ii) of Corollary \ref{C:basicSobolev1estimate} since $Bf\in\mathcal{O}(\Omega)\cap
  C^{\infty}(\overline{\Omega})$. The second follows from the induction hypothesis \eqref{E:inductionhypo}.
\end{proof}

The new issue for $k>1$ is to verify that only derivatives involving $X$ appear in the (essential) terms occurring when
the higher-order operator $X^{k}$ is passed by $B$. The following two lemmas will be used. 

\begin{lemma}\label{L:AB}
 Let $A$ and $B$ be differential operators of order $a$ and $b$, respectively. Set
 \begin{align*}
   C_{i}=\underbrace{\left[\dots\left[\left[A,B\right],B\right] ,\dots,B\right]}_{i\;\text{commutators}},
   \;\;\widetilde{C}_{i}=\underbrace{\left[B,\dots,\left[B,\left[A,B\right]\right]\dots\right]}_{i\;\text{commutators}}.
 \end{align*}
 Then both $C_{i}$ and $\widetilde{C}_{i}$ are of order $a+i(b-1)$. Moreover, for any $m\in\mathbb{N}$
 \begin{align*}
   \left[A,B^{m}\right]=\sum_{j=0}^{m-1}{m\choose j}B^{j}C_{m-j}=\sum_{j=0}^{m-1}{m\choose j}\widetilde{C}_{m-j}B^{j}.
 \end{align*}
\end{lemma}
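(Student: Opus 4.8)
The plan is to prove Lemma \ref{L:AB} by induction, first on the number $i$ of commutators for the order statement, then on $m$ for the expansion formula. Everything here is purely algebraic — no analysis, no properties of $\Omega$ — so the difficulty is bookkeeping rather than any genuine obstacle.

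First I would establish that $C_i$ and $\widetilde C_i$ have order $a+i(b-1)$. The base case $i=0$ is trivial ($C_0=A$, order $a$). For the inductive step, recall the standard fact that if $P$ has order $p$ and $Q$ has order $q$, then $[P,Q]$ has order at most $p+q-1$ (the leading symbols commute, so the top-order term cancels). Applying this with $P=C_{i-1}$ of order $a+(i-1)(b-1)$ and $Q=B$ of order $b$ gives that $C_i=[C_{i-1},B]$ has order at most $a+(i-1)(b-1)+b-1=a+i(b-1)$, and symmetrically for $\widetilde C_i=[B,\widetilde C_{i-1}]$. (One should read ``of order $a+i(b-1)$'' in the lemma as ``of order at most'', which is all that is used in the sequel; I'd note this.)

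Next I would prove the expansion $[A,B^m]=\sum_{j=0}^{m-1}\binom{m}{j}B^jC_{m-j}$ by induction on $m$. For $m=1$ it reads $[A,B]=C_1$, which is the definition. Assume it for $m$. Then write
\begin{align*}
  [A,B^{m+1}]=[A,B^m]B+B^m[A,B]=\left(\sum_{j=0}^{m-1}\binom{m}{j}B^jC_{m-j}\right)B+B^mC_1.
\end{align*}
In the first sum I would commute the trailing $B$ leftward past $C_{m-j}$ using $C_{m-j}B=BC_{m-j}+[C_{m-j},B]=BC_{m-j}+C_{m-j+1}$, obtaining $\sum_{j=0}^{m-1}\binom{m}{j}\left(B^{j+1}C_{m-j}+B^jC_{m-j+1}\right)$. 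Re-indexing and adding the $B^mC_1$ term, the coefficient of $B^jC_{m+1-j}$ becomes $\binom{m}{j-1}+\binom{m}{j}=\binom{m+1}{j}$ by Pascal's rule (with the endpoint terms $j=0$ and $j=m$ checked separately), which is exactly the claimed formula for $m+1$. The second identity, $[A,B^m]=\sum_{j=0}^{m-1}\binom{m}{j}\widetilde C_{m-j}B^j$, follows by the mirror-image computation: expand $[A,B^{m+1}]=B[A,B^m]+[A,B]B^m$ instead, and commute the leading $B$ rightward past $\widetilde C_{m-j}$ via $B\widetilde C_{m-j}=\widetilde C_{m-j}B+[B,\widetilde C_{m-j}]=\widetilde C_{m-j}B+\widetilde C_{m-j+1}$; alternatively, one can simply apply the first identity to the operators read in the opposite order, or take formal adjoints. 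The only point requiring minor care is matching the two different nesting conventions for $C_i$ versus $\widetilde C_i$ in these two recursions, but since each is generated by the recursion appropriate to the side on which the extra $B$ is being moved, the induction closes cleanly.
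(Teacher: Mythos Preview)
Your proof is correct and is exactly the approach the paper indicates: the paper's own proof consists of a single sentence stating that the conclusion follows by induction on $m$ in a straightforward manner, and you have supplied precisely those straightforward details. Your parenthetical remark that ``of order $a+i(b-1)$'' should be read as ``of order at most $a+i(b-1)$'' is well taken and is indeed all that is used downstream.
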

\begin{proof}
The conclusion follows by induction on $m$ in a straightforward manner.
\end{proof}

\begin{lemma}\label{L:Tm}
 The vector field $X$ and the operator $T$ as defined in \eqref{D:T} satisfy
 \begin{align}\label{E:Tm}
  \left\|\left(X^{m}-T^{m}\right)u\right\|_{1}\lesssim\sum_{\ell=0}^{m-1}\left\|T^{\ell}u\right\|_{1}
 \end{align}
 as well as
 \begin{align}\label{E:XmTm}
  \left\|X^{m}u\right\|_{1}\lesssim\sum_{\ell=0}^{m}\left\|T^{m}u \right\|_{1}
\end{align}
 for all $u\in\Lambda^{0,1}(\overline{\Omega})$ and $m\in\mathbb{N}$ with a constant independent of $u$. 
\end{lemma}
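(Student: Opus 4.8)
The plan is to prove both \eqref{E:Tm} and \eqref{E:XmTm} by induction on $m$, using Lemma \ref{L:T} as the base case and Lemma \ref{L:AB} to control the commutators that arise. Note first that $X-T$ is, by \eqref{D:T}, multiplication by a $C^\infty(\overline\Omega)$-matrix composed with $X$ acting componentwise together with zeroth-order terms; in particular $X-T$ is a first-order operator and part (ii) of Lemma \ref{L:T} says $\|(X-T)u\|_1\lesssim\|u\|_1$. For $m=1$, \eqref{E:Tm} is exactly Lemma \ref{L:T}(ii), and \eqref{E:XmTm} follows by writing $X=T+(X-T)$ and applying Lemma \ref{L:T}(ii) again.

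For the inductive step of \eqref{E:Tm}, I would write
$$X^{m}-T^{m}=X\left(X^{m-1}-T^{m-1}\right)+(X-T)T^{m-1}.$$
The second term is bounded by $\|T^{m-1}u\|_1$ using Lemma \ref{L:T}(ii) (after noting $T^{m-1}u\in\Lambda^{0,1}(\overline\Omega)$, which holds since $T$ preserves smoothness up to the boundary). For the first term, $X$ is a first-order operator, so $\|Xv\|_1\lesssim\|v\|_2$; however, to stay at the level of the $\|.\|_1$-norm I instead commute: $X\left(X^{m-1}-T^{m-1}\right)u$ has its $\|.\|_1$-norm controlled once I know the $\|.\|_1$-norm of $\left(X^{m-1}-T^{m-1}\right)u$ together with one more derivative — which is why the statement is phrased with $\|.\|_1$ on both sides and a sum over $\ell$. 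More precisely, I would use the induction hypothesis to expand $\left(X^{m-1}-T^{m-1}\right)u$ as a sum of terms each of which is $T^{\ell}u$ hit by operators of order at most one (coming from the explicit form of $X-T$ and from commutators $[X,T^j]$, which by Lemma \ref{L:AB} with $A=X$, $B=T$ are of order $1+j\cdot 0=1$), and then absorb the extra $X$ using that a first-order operator applied to something whose $\|.\|_1$-norm is controlled... — here is the cleaner route: since $[X,T^\ell]$ is first order and $X=T+(X-T)$, one rewrites $X^m u$ directly (below) and deduces \eqref{E:Tm} from \eqref{E:XmTm} by subtraction once both are set up inductively in tandem.

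Concretely, for \eqref{E:XmTm} I would write $X^m = T^m + [X^m - T^m]$ is circular, so instead peel one factor: $X^m u = X(X^{m-1}u)$, apply the induction hypothesis for \eqref{E:XmTm} at level $m-1$ to write $\|X^{m-1}u\|_1\lesssim\sum_{\ell=0}^{m-1}\|T^\ell u\|_1$, and then observe that to pass to $\|X^m u\|_1$ one needs $\|X^{m-1}u\|_2$, not $\|X^{m-1}u\|_1$. This forces the induction to be run on the $\|.\|_1$-norm but with the understanding that each application of $X$ is first commuted inward. So the correct bookkeeping is: $X^m u = \sum (\text{products of } [X,T]\text{-type commutators}) T^\ell u + T^m u$ obtained by repeatedly replacing the leftmost $X$ via $XT = TX + [X,T]$, where $[X,T]$ is a first-order operator by Lemma \ref{L:AB}. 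Iterating $m$ times produces $T^m u$ plus $O(m)$ terms of the form $P_j T^{\ell} u$ with each $P_j$ a composition of first-order operators of total order $\le 1$ in the relevant slot; taking $\|.\|_1$-norms and using that a first-order operator raises the Sobolev index by at most one, combined with Lemma \ref{L:T}(ii) to replace $X$ by $T$ wherever needed, yields \eqref{E:XmTm}. Then \eqref{E:Tm} follows by subtracting $T^m u$ from the resulting expansion.

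The main obstacle I anticipate is the bookkeeping of operator orders when commuting $X$ past powers of $T$ while keeping everything at the level of the first Sobolev norm: naively, $X^m u$ involves $m$ derivatives, so one must be careful that the commutator structure from Lemma \ref{L:AB} genuinely reduces each term to at most one net derivative falling on a power $T^\ell u$ with $\ell<m$ plus the single clean term $T^m u$. Lemma \ref{L:AB} (with $A=X$, $B=T$, both order one, hence $C_i,\widetilde C_i$ of order $1+i\cdot 0=1$) is precisely the tool that makes this work: every iterated commutator of $X$ with $T$ stays first order, so no accumulation of derivative orders occurs, and the sum over $\ell$ on the right-hand side absorbs the finitely many resulting terms. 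Once the expansion is in hand, the norm estimates are routine applications of Lemma \ref{L:T}(ii) and the fact that first-order operators are bounded from $H^1$ into $L^2$ (and, after re-commuting, the mixed terms land back in the $H^1$-controlled family).
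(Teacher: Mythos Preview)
There is a concrete error that contaminates the whole argument: you misread \eqref{D:T}. The operator $X-T$ is \emph{zeroth} order, not first order. In \eqref{D:T} the vector field $X$ has already acted on the fixed coefficient function $r_{z_j}$, so $(T-X)u=\dbar r\cdot\chi\sum_j X(r_{z_j})\,u_j$ is simply multiplication of $u$ by a smooth matrix. This is exactly why Lemma~\ref{L:T}(ii) holds with no loss of derivatives, and it is the structural fact that makes the lemma go through: when you invoke Lemma~\ref{L:AB} you should take $A=X-T$ of order $a=0$ and $B=T$ of order $b=1$, so that every iterated commutator $C_i,\widetilde C_i$ has order $0+i\cdot 0=0$, not $1$.

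The misreading also leads you to the wrong telescoping. Your decomposition $X^m-T^m=X(X^{m-1}-T^{m-1})+(X-T)T^{m-1}$ leaves $X$ on the \emph{left}; after applying the induction hypothesis to $(X^{m-1}-T^{m-1})u$ you would need its $\|\cdot\|_2$-norm to control the outer $X$ in $\|\cdot\|_1$, and that you do not have (your remark that ``a first-order operator raises the Sobolev index by at most one'' goes in the wrong direction). The paper instead peels from the right,
\[
X^j-T^j=(X^{j-1}-T^{j-1})X+T^{j-1}(X-T),
\]
and applies the induction hypothesis with $Xu$ in place of $u$, obtaining $\sum_{\ell\le j-2}\|T^\ell(Xu)\|_1$. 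Writing $T^\ell X=T^{\ell+1}+T^\ell(X-T)$ and commuting the zeroth-order $X-T$ past $T^\ell$ via Lemma~\ref{L:AB} (now with the correct orders, so the commutator contributes only $\sum_{\ell'<\ell}\|T^{\ell'}u\|_1$) finishes \eqref{E:Tm}. Inequality \eqref{E:XmTm} is then a one-line consequence of \eqref{E:Tm} via $X^m=(X^m-T^m)+T^m$; there is no need to run the two inductions in tandem.
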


\begin{proof}
  Inequality \eqref{E:Tm} for $m=1$ is (ii) of Lemma \ref{L:T}. Suppose that \eqref{E:Tm} holds for any $m<j$ for some $j>1$.  Then
  \begin{align*}
    \left\|\left(X^{j}-T^{j} \right)u \right\|_{1}
    &\leq
    \left\|\left(X^{j-1}-T^{j-1}\right)Xu \right\|_{1}+\left\|T^{j-1}\left(X-T\right)u \right\|_{1}\\
    &\lesssim\sum_{\ell=0}^{j-2}\left\|T^{\ell}(Xu)\right\|_{1}+\left\|T^{j-1}\left(X-T\right)u \right\|_{1},
   \end{align*}
   by the induction hypothesis. 
   Writing $T^{\ell}(Xu)=T^{\ell+1}u+T^{\ell}(X-T)u$, it follows that 
   \begin{align}\label{E:Tmproof}
     \left\|\left(X^{j}-T^{j} \right)u \right\|_{1}\lesssim
     \sum_{\ell=0}^{j-1}\left\|T^{\ell} u\right\|_{1}+\sum_{\ell=0}^{j-1}\left\|T^{\ell}(X-T)u\right\|_{1}
   \end{align}
   holds.
  Lemma \ref{L:AB} applied to the second term in \eqref{E:Tmproof} and the induction hypothesis yield
   \begin{align*}
    \sum_{\ell=0}^{j-1} \left\|T^{\ell}\left(X-T\right)u \right\|_{1}&\leq
    \sum_{\ell=0}^{j-1}\Bigl\{ \left\|(X-T)T^{\ell}u \right\|_{1}+\left\|\left[T^{\ell},X-T \right] u\right\|_{1}\Bigr\}\\
    &\lesssim\sum_{\ell=0}^{j-1}\left\|T^{\ell}u \right\|_{1}.
   \end{align*}
   Hence \eqref{E:Tm} has been proved. Inequality \eqref{E:XmTm} follows straightforwardly from \eqref{E:Tm} after writing
   $X^{m}=(X^{m}-T^{m})+T^{m}$.
\end{proof}

As in the case $k=1$, the form $\varphi=N\dbar f$ is used to pass $X$ by $B$ in two steps. The next result, and extension of Lemma \ref{L:basicphiestimates}, collects estimates on the derivatives of $\varphi$ that arise in the commutation process.

\begin{lemma}\label{L:mphiestimates}
  For $f\in C^{\infty}(\overline{\Omega})$ and $\varphi=N\dbar f$, the inequalities
  \begin{itemize}
    \item[(i)] $Q\left(T^{m}\varphi,T^{m}\varphi\right)\lesssim \|f\|_{m,X}^{2}$,
    \item[(ii)] $\left\|T^{m}\varphi\right\|^{2}+\sum_{j=1}^{n}\left\|\overline{L}_{j}T^{m}\varphi \right\|^{2}
       +\left\|\overline{\cn}T^{m}\varphi \right\|^{2}\lesssim \|f\|_{m,X}^{2}$,
       \item[(iii)] $\left\|T^{m}\varphi\right\|_{1}^{2}\lesssim\left\|T^{m+1}\varphi\right\|^{2}+\|f\|_{m,X}^{2}$
  \end{itemize}
  hold for all  $m\in\{0,\dots,k-1\}$, with a constant independent of $f$.
\end{lemma}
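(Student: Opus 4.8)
The plan is to prove Lemma \ref{L:mphiestimates} by induction on $m$, mirroring the structure of the case $m=0$ already recorded in Lemma \ref{L:basicphiestimates} but now with $X^{m}$ (or rather $T^{m}$) inserted, and using the available machinery --- Lemma \ref{L:T}, Lemma \ref{L:AB}, Lemma \ref{L:Tm}, Corollary \ref{C:basicSobolev1estimate}, the compactness estimate \eqref{E:compact}, and the special relations $\dbar\varphi=0$, $\dbarstar\varphi=f-Bf$ --- to re-express every derivative that appears in terms of $X$ (equivalently $T$) applied to $\varphi$. The base case $m=0$ is exactly Lemma \ref{L:basicphiestimates}. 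For the inductive step, I would establish (i), (ii), (iii) in that order for a given $m$, each one feeding the next, just as in Lemma \ref{L:basicphiestimates}; the induction hypothesis supplies the analogous three estimates for all smaller values, and, crucially, the outer induction hypothesis \eqref{E:inductionhypo} on $\|Bf\|_{\ell}$ for $\ell\le k-1$ controls $\|X^{\ell}(f-Bf)\|$ by $\|f\|_{\ell,X}$ whenever $\ell\le k-1$, i.e. exactly in the range $m\le k-1$ where the lemma is asserted.

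For part (i), I would start from $Q(T^{m}\varphi,T^{m}\varphi)=\|\dbar T^{m}\varphi\|^{2}+\|\dbarstar T^{m}\varphi\|^{2}$ and commute the leading $X^{m}$ past $\dbar$ and $\vartheta$. Using Lemma \ref{L:T}(ii) and Lemma \ref{L:Tm} to pass between $T^{m}$ and $X^{m}$ with errors controlled by $\sum_{\ell<m}\|T^{\ell}\varphi\|_{1}$, and Lemma \ref{L:AB} to expand $[\dbar,X^{m}]$ and $[\vartheta,X^{m}]$ as sums of the form $\sum_{j<m}X^{j}(\text{lower order})$, one reduces $\|\dbar T^{m}\varphi\|$ to $\|[\dbar,X^{m}]\varphi\|+(\text{errors})\lesssim \sum_{\ell\le m-1}\|T^{\ell}\varphi\|_{1}+(\text{errors})$ after using $\dbar\varphi=0$; similarly $\|\dbarstar T^{m}\varphi\|\lesssim \|X^{m}\vartheta\varphi\|+\sum_{\ell\le m-1}\|T^{\ell}\varphi\|_{1}+(\text{errors})$, and $\|X^{m}\vartheta\varphi\|=\|X^{m}(f-Bf)\|\lesssim \|f\|_{m,X}+\|X^{m}Bf\|$. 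The inductive hypotheses (iii) for smaller $m$ turn each $\|T^{\ell}\varphi\|_{1}$ with $\ell\le m-1$ into $\|T^{\ell+1}\varphi\|+\|f\|_{\ell,X}\le \|T^{m}\varphi\|+\|f\|_{m,X}$, and then (ii) at level $m-1$ (or \eqref{E:basicestimate} applied to $T^{m-1}\varphi$, i.e. a one-step version) can be looped in; the only genuinely new term, $\|X^{m}Bf\|$, is handled by the $k>1$ analogue of Lemma \ref{L:XBfbyTphi} which bounds it by $\|T^{m}\varphi\|^{2}+\|f\|_{m,X}^{2}$ --- exactly as $\|XBf\|$ was handled when $m=1$. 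Absorbing $\|T^{m}\varphi\|$ via the compactness estimate \eqref{E:compact} (small $\epsilon$), using (ii) at level $m-1$ to bound the resulting $\|\varphi\|$-type lower-order terms, closes (i). Part (ii) then follows from (i) by \eqref{E:basicestimate} applied to the form $T^{m}\varphi\in\mathcal{D}^{0,1}(\Omega)$ (legitimate by Lemma \ref{L:T}(i)). Part (iii) follows by applying Corollary \ref{C:basicSobolev1estimate}(i) componentwise to $T^{m}\varphi$ and substituting (ii).

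The main obstacle I anticipate is bookkeeping rather than conceptual: one must be scrupulous that every intermediate estimate stays within the range $m\le k-1$ so that both induction hypotheses (the inner one on $\|T^{\ell}\varphi\|$ and the outer one \eqref{E:inductionhypo} on $\|Bf\|_{\ell}$) are genuinely available, and one must track that commuting $X^{m}$ past $\dbar$, $\vartheta$, and past $(X-T)$ produces only terms of the shape $\sum_{\ell\le m-1}T^{\ell}(\text{order-}1)$, never a stray $X^{m}$ hitting something that cannot be re-expressed via $\varphi$. Lemma \ref{L:AB} is precisely what guarantees the commutator expansions have the right degree and factor order, and Lemma \ref{L:Tm} is what lets one freely trade $X^{m}$ for $T^{m}$; the circular-looking dependence between (i) and the Lemma \ref{L:XBfbyTphi}-type bound on $\|X^{m}Bf\|$ is broken, as in the $k=1$ case, by the single application of \eqref{E:compact} with $\epsilon$ small, so there is no vicious circle. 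Once the $m=0$ proof is understood as a template, the inductive step is a routine --- if lengthy --- transcription with $T^{m}$ and $X^{m}$ inserted and the two induction hypotheses invoked at the marked places.
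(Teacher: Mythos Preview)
Your proposal is correct and follows the same inductive skeleton as the paper: base case $m=0$ is Lemma~\ref{L:basicphiestimates}; for the step you commute $T^m$ past $\dbar$ and $\dbarstar$ via Lemmas~\ref{L:Tm} and~\ref{L:AB}, reduce the commutator errors to $\sum_{\ell\le m-1}\|T^{\ell}\varphi\|_1$, and then derive (ii) and (iii) from (i) exactly as in Lemma~\ref{L:basicphiestimates}.

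The one place where you diverge from the paper is the handling of $\|X^{m}Bf\|$. You propose to bound it by a higher-$m$ analogue of Lemma~\ref{L:XBfbyTphi}, obtaining $\|X^{m}Bf\|^{2}\lesssim\|T^{m}\varphi\|^{2}+\|f\|_{m,X}^{2}$, and then absorb the resulting $\|T^{m}\varphi\|^{2}$ via the compactness estimate~\eqref{E:compact}. The paper instead observes that because $m\le k-1$, the outer induction hypothesis~\eqref{E:inductionhypo} is directly applicable: since $X$ is a first-order operator, $\|X^{m}Bf\|\lesssim\|Bf\|_{m}\lesssim\|f\|_{m,X}$, and the term disappears outright. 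This is the whole reason the lemma is stated only for $m\in\{0,\dots,k-1\}$. Your detour is not wrong --- the analogue of Lemma~\ref{L:XBfbyTphi} at level $m$ needs only (iii) at levels $<m$, which the inner induction supplies --- but it re-proves something already packaged in~\eqref{E:inductionhypo} and entangles the argument with Lemma~\ref{L:Tkphireduction}, which in the paper's logical order comes \emph{after} the present lemma. That said, your explicit invocation of compactness to absorb the leftover $\|T^{m}\varphi\|^{2}$ (arising from (iii) at level $m-1$) is a legitimate way to close the estimate; the paper is terse at exactly this point.
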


\begin{proof}
  In the case of $m=0$, Lemma \ref{L:mphiestimates} corresponds to Lemma \ref{L:basicphiestimates}. Suppose (i)--(iii) hold true for all $m<j$ for some $j>1$.  Then
  \begin{align*}
     Q\left(T^{j}\varphi, T^{j}\varphi\right)&=
     \left\|\dbar T^{j}\varphi\right\|^{2}+\left\|\dbarstar T^{j}\varphi\right\|^{2}\\
     &\lesssim
      \left\|\dbar X^{j}\varphi\right\|^{2}+\left\|\vartheta X^{j}\varphi\right\|^{2}+\left\|\left(X^{j}-T^{j}\right)\varphi\right\|_{1}^{2}\\
      &\lesssim
       \left\|\dbar X^{j}\varphi\right\|^{2}+\left\|\vartheta X^{j}\varphi\right\|^{2}+\sum_{\ell=0}^{j-1}\left\|T^{\ell}\varphi \right\|_{1}^{2}
  \end{align*}
  by \eqref{E:Tm}. Next, commuting $X$ by $\dbar$ and $\vartheta$ yields
  \begin{align*}
     \left\|\dbar X^{j}\varphi\right\|^{2}+\left\|\vartheta X^{j}\varphi\right\|^{2}
     &\lesssim
     \left\|X^{j}\dbarstar\varphi\right\|^{2}+\left\|\left[\dbar,X^{j} \right]\varphi \right\|^{2}+\left\|\left[\vartheta,X^{j} \right]\varphi \right\|^{2}\\
     &\lesssim
     \left\|X^{j}f\right\|^{2}+\|X^{j}Bf\|^{2}+\sum_{\ell=0}^{j-1}\left\|X^{\ell}\varphi\right\|_{1}^{2},
  \end{align*}
  where in the last step Lemma \ref{L:AB} was used. It now follows from \eqref{E:XmTm}  that
  \begin{align}\label{E:QTphiestimate}
    Q\left(T^{j}\varphi, T^{j}\varphi\right)\lesssim
    \left\|X^{j}f\right\|^{2}+\|X^{j}Bf\|^{2}+\sum_{\ell=0}^{j-1}\left\|T^{\ell}\varphi \right\|_{1}^{2}
  \end{align}
  The induction hypothesis on (ii) and (iii) and \eqref{E:inductionhypo} complete the proof of (i) for $m=j$. The remaining parts, (ii) and (iii), are shown as in Lemma \ref{L:basicphiestimates}.
\end{proof}

\begin{lemma}\label{L:Tkphireduction} 
  For $f\in C^{\infty}(\overline{\Omega})$ and $\varphi=N\dbar f$, the inequality
  \begin{align*}
    \left\|X^{k}Bf \right\|^{2}\lesssim\left\|T^{k}\varphi \right\|^{2}+\|f\|_{k,X}^{2}
  \end{align*}
  holds with a constant independent of $f$.
\end{lemma}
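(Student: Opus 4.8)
The plan is to mimic the proof of Lemma~\ref{L:XBfbyTphi} (the case $k=1$), replacing $X$ by $X^k$ throughout and absorbing all lower-order terms into $\|f\|_{k,X}^2$ by invoking the induction hypothesis \eqref{E:inductionhypo} and the estimates of Lemma~\ref{L:mphiestimates}. First I would write $\|X^kBf\|^2=(X^kf,X^kBf)-(X^k\dbarstar\varphi,X^kBf)$ using $Bf=f-\dbarstar\varphi$, apply the (sc)-(lc) inequality, and then commute $X^k$ past $\vartheta$ via Lemma~\ref{L:AB}: the commutator $[X^k,\vartheta]$ has order $k$, so $\|[X^k,\vartheta]\varphi\|\lesssim\|\varphi\|_k$, and by repeated application of Lemma~\ref{L:mphiestimates}(iii) together with \eqref{E:XmTm} one gets $\|\varphi\|_k\lesssim\sum_{\ell=0}^{k}\|T^\ell\varphi\|+\|f\|_{k-1,X}\lesssim\|T^k\varphi\|+\|f\|_{k,X}$ (using Lemma~\ref{L:mphiestimates}(i)--(iii) to bound the intermediate $\|T^\ell\varphi\|$ for $\ell<k$). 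This reduces matters to controlling $|(\vartheta X^k\varphi,X^kBf)|$.

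Next I would switch from $X^k$ to $T^k$ on the left factor, writing $\vartheta X^k\varphi=\dbarstar T^k\varphi+\vartheta(X^k-T^k)\varphi$; the second piece is $\lesssim\|(X^k-T^k)\varphi\|_1\lesssim\sum_{\ell=0}^{k-1}\|T^\ell\varphi\|_1$ by \eqref{E:Tm}, which is $\lesssim\|T^k\varphi\|+\|f\|_{k,X}$ by Lemma~\ref{L:mphiestimates}. For the main term I would move $\dbarstar$ onto the other factor: $(\dbarstar T^k\varphi,X^kBf)=(T^k\varphi,\dbar X^kBf)$, and since $Bf$ is holomorphic, $\dbar X^kBf=[\dbar,X^k]Bf$, an operator of order $k$ applied to $Bf$, hence $|(T^k\varphi,\dbar X^kBf)|\lesssim\|T^k\varphi\|\cdot\|Bf\|_k$. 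Finally $\|Bf\|_k$ is dominated, via Lemma~\ref{L:XkBfreduction}, by $\|X^kBf\|+\|f\|_{k-1,X}$. Assembling these, and using the (sc)-(lc) inequality to absorb the resulting $\|X^kBf\|$-terms into the left side, yields
\begin{align*}
  \|X^kBf\|^2\lesssim\|T^k\varphi\|^2+\|f\|_{k,X}^2,
\end{align*}
which is the claim.

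The main obstacle I anticipate is the bookkeeping of commutators: one must check carefully that every term produced by commuting $X^k$ (or $T^k$) past $\dbar$, $\vartheta$, and $\dbarstar$ is genuinely of order at most $k$ in $\varphi$ — this is exactly what Lemma~\ref{L:AB} guarantees, since $[\dbar,X]$, $[\vartheta,X]$ are order one (zeroth-order in the principal part after one commutator, so each further commutator does not raise the order past $k$) — and then that each such term, together with the $\|f\|_{k-1,X}$ contributions from the induction hypothesis and from Lemma~\ref{L:XkBfreduction}, can be folded into $\|T^k\varphi\|^2+\|f\|_{k,X}^2$ via Lemma~\ref{L:mphiestimates}(i)--(iii) and \eqref{E:XmTm}. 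None of this is deep, but the chain of substitutions is long, so the care is in ensuring no term of order $k+1$ in $\varphi$, or of order $>k$ in $f$, sneaks in, and that the final absorption of $\|X^kBf\|^2$ and $\|T^k\varphi\|^2$ (the latter absorbed in the subsequent lemma, not here) is legitimate.
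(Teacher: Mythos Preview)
Your approach is essentially the same as the paper's, and all the ingredients you cite are the right ones. One point needs tightening: after invoking Lemma~\ref{L:AB} you write $\|[X^{k},\vartheta]\varphi\|\lesssim\|\varphi\|_{k}$ and then claim $\|\varphi\|_{k}\lesssim\|T^{k}\varphi\|+\|f\|_{k,X}$. The full Sobolev norm $\|\varphi\|_{k}$ is \emph{not} what Lemma~\ref{L:mphiestimates}(iii) and \eqref{E:XmTm} control, and bounding it by $\|T^{k}\varphi\|+\|f\|_{k,X}$ is not available at this stage (indeed, establishing such an estimate is essentially equivalent to the theorem itself). What Lemma~\ref{L:AB} actually gives is the structured expansion $[\vartheta,X^{k}]=\sum_{j=0}^{k-1}{k\choose j}\widetilde{C}_{k-j}X^{j}$ with each $\widetilde{C}_{i}$ of order~$1$, whence
\[
\bigl\|[X^{k},\vartheta]\varphi\bigr\|\;\lesssim\;\sum_{\ell=0}^{k-1}\bigl\|X^{\ell}\varphi\bigr\|_{1},
\]
and it is \emph{this} quantity that \eqref{E:XmTm} and Lemma~\ref{L:mphiestimates}(iii) bound by $\|T^{k}\varphi\|+\|f\|_{k-1,X}$, exactly as you go on to describe. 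With that correction your argument matches the paper's proof line for line.
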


\begin{proof}
  Using arguments analogous to the ones preceding  \eqref{E:XBfXphi}, with $X^{k}Bf$ in place of $XBf$, it follows that
  \begin{align}\label{E:XBfXphik}
    \left\|X^{k}Bf \right\|^{2}&\lesssim \left\|X^{k}f\right\|^{2}+\left|\left(X^{k}\dbarstar\varphi , X^{k}Bf\right)\right|\notag\\
    &\lesssim \left\|X^{k}f\right\|^{2}+\left|\left(\vartheta X^{k}\varphi , X^{k}Bf\right)\right|
    +\left|\left([X^{k},\vartheta]\varphi , X^{k}Bf\right)\right|.
  \end{align}
  For the last term in \eqref{E:XBfXphik}, apply Lemma \ref{L:AB} followed by \eqref{E:XmTm} to obtain
  \begin{align*}
    \left|\left([X^{k},\vartheta]\varphi , X^{k}Bf\right)\right|\lesssim \sum_{\ell=0}^{k-1}\left\|X^{k}\varphi\right\|_{1}\cdot\|X^{k}Bf\|
    \lesssim\sum_{\ell=0}^{k-1}\left\|T^{k}\varphi\right\|_{1}\cdot\|X^{k}Bf\|.
  \end{align*}
  For the second term in \eqref{E:XBfXphik}, proceeding analogously to the arguments subsequent to \eqref{E:XBfXphi} it follows that 
  \begin{align*}
    \left|\left(\vartheta X^{k}\varphi , X^{k}Bf\right)\right|&\lesssim
    \left|\left(\dbarstar T^{k}\varphi , X^{k}Bf\right)\right|+\left\|\left(X^{k}-T^{k}\right)\varphi \right\|_{1}\cdot\left\|X^{k}Bf \right\|\\
    &\lesssim 
    \left\|T^{k}\varphi \right\|\cdot\left\|Bf \right\|_{k}+\sum_{\ell=0}^{k-1}\left\|T^{\ell}\varphi \right\|_{1}\cdot\left\|X^{k}Bf \right\|.
  \end{align*}
  Lemma \ref{L:XkBfreduction} and part (iii) of Lemma \ref{L:mphiestimates} then yield
  \begin{align*}
     \left\|X^{k}Bf \right\|^{2}\lesssim 
     \left\|T^{k}\varphi \right\|^{2}+\left\|f\right\|_{k,X}^{2}.
  \end{align*}
\end{proof}

The following result concludes the proof of \eqref{E:ktoprove}.

\begin{lemma}
  For $f\in C^{\infty}(\overline{\Omega})$ and $\varphi=N\dbar f$, the inequality
  \begin{align*}
    \left\|T^{k}\varphi\right\|^{2}\lesssim\|f\|_{k,X}^{2}
  \end{align*}
  holds with a constant independent of $f$.
\end{lemma}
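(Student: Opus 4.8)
The plan is to run the same induction structure as in the case $k=1$, i.e., the argument of the lemma establishing \eqref{E:Tphibyf_X}, but now for $T^{k}\varphi$ in place of $T\varphi$. First I would invoke the compactness estimate \eqref{E:compact} applied to the form $T^{k}\varphi$, which by Lemma \ref{L:T}(i) (iterated) lies in $\mathcal{D}^{0,1}(\Omega)$; together with Lemma \ref{L:mphiestimates}(ii) this gives, for every $\epsilon>0$,
\begin{align*}
  \|T^{k}\varphi\|^{2}\lesssim\epsilon\left[\|\dbar T^{k}\varphi\|^{2}+\|\dbarstar T^{k}\varphi\|^{2}\right]+C(\epsilon)\|f\|_{k,X}^{2}.
\end{align*}
So it suffices to bound $Q(T^{k}\varphi,T^{k}\varphi)$ by $\|T^{k}\varphi\|^{2}+\|f\|_{k,X}^{2}$ and absorb. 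But this is exactly inequality \eqref{E:QTphiestimate} from the proof of Lemma \ref{L:mphiestimates} (with $j=k$): there we already showed
\begin{align*}
  Q(T^{k}\varphi,T^{k}\varphi)\lesssim\|X^{k}f\|^{2}+\|X^{k}Bf\|^{2}+\sum_{\ell=0}^{k-1}\|T^{\ell}\varphi\|_{1}^{2}.
\end{align*}

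The next step is to dispose of the three terms on the right. The sum $\sum_{\ell=0}^{k-1}\|T^{\ell}\varphi\|_{1}^{2}$ is controlled by $\sum_{\ell=0}^{k}\|T^{\ell}\varphi\|^{2}+\|f\|_{k-1,X}^{2}$ via Lemma \ref{L:mphiestimates}(iii); and the lower-order terms $\|T^{\ell}\varphi\|^{2}$, $\ell\leq k-1$, are $\lesssim\|f\|_{k-1,X}^{2}$ by Lemma \ref{L:mphiestimates}(ii), so the only surviving piece of that sum is $\|T^{k}\varphi\|^{2}$, which will be absorbed at the end. The term $\|X^{k}f\|^{2}$ is trivially $\leq\|f\|_{k,X}^{2}$. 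The term $\|X^{k}Bf\|^{2}$ is handled by Lemma \ref{L:Tkphireduction}, which gives precisely $\|X^{k}Bf\|^{2}\lesssim\|T^{k}\varphi\|^{2}+\|f\|_{k,X}^{2}$.

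Combining these, $Q(T^{k}\varphi,T^{k}\varphi)\lesssim\|T^{k}\varphi\|^{2}+\|f\|_{k,X}^{2}$. Substituting back into the compactness estimate and choosing $\epsilon$ small enough to absorb the $\epsilon\|T^{k}\varphi\|^{2}$ contribution into the left-hand side yields $\|T^{k}\varphi\|^{2}\lesssim\|f\|_{k,X}^{2}$, as claimed. Finally, chaining this with Lemma \ref{L:Tkphireduction} and Lemma \ref{L:XkBfreduction} gives $\|Bf\|_{k}\lesssim\|X^{k}Bf\|+\|f\|_{k-1,X}\lesssim\|T^{k}\varphi\|+\|f\|_{k,X}\lesssim\|f\|_{k,X}$, completing the induction step \eqref{E:ktoprove} and hence, via Lemma \ref{L:apriori}, the proof of Theorem \ref{T:MainTheorem}.

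I expect no genuinely new obstacle here: the work has all been front-loaded into Lemmas \ref{L:Tm}, \ref{L:mphiestimates}, and \ref{L:Tkphireduction}, and this final lemma is the assembly step. The one point requiring a little care is the bookkeeping in the absorption — one must make sure that the $\|T^{k}\varphi\|^{2}$ terms generated both by \eqref{E:QTphiestimate} (through the $\ell=k$ boundary case of Lemma \ref{L:mphiestimates}(iii), if one is not careful to keep that sum running only to $k-1$) and by Lemma \ref{L:Tkphireduction} all carry a factor that can be made small by the choice of $\epsilon$, which is fine since every one of them sits inside the $\epsilon[\cdots]$ bracket. A secondary subtlety is that Lemma \ref{L:Tkphireduction} itself was proved using only \eqref{E:inductionhypo} and Lemma \ref{L:mphiestimates}, none of which circularly depends on the present lemma, so the logical ordering is sound.
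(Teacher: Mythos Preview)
Your proposal is correct and follows essentially the same route as the paper's own proof: apply the compactness estimate \eqref{E:compact} to $T^{k}\varphi$, control the lower-order term via Lemma \ref{L:mphiestimates}(ii), invoke the computation \eqref{E:QTphiestimate} for $j=k$, dispose of the sum using Lemma \ref{L:mphiestimates}(iii) and of $\|X^{k}Bf\|^{2}$ using Lemma \ref{L:Tkphireduction}, and absorb. If anything, your write-up is slightly more explicit than the paper's about the residual $\|T^{k}\varphi\|^{2}$ term that arises from applying Lemma \ref{L:mphiestimates}(iii) at the top index and why it can be absorbed; the paper's proof elides this into the line ``$\lesssim\|f\|_{k,X}^{2}+\|X^{k}Bf\|^{2}$'' without comment.
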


\begin{proof}
  For $\epsilon>0$, the compactness estimates \eqref{E:compact} gives
  \begin{align*}
    \left\|T^{k}\varphi\right\|^{2}&\leq \epsilon Q\left(T^{k}\varphi,T^{k}\varphi \right)+C(\epsilon)\left|T^{k}\varphi \right|_{-1}^{2}\\
    &\leq 
    \epsilon Q\left(T^{k}\varphi,T^{k}\varphi \right)+C_{1}(\epsilon)\left\|f\right\|_{k-1,X}^{2}
  \end{align*}
  by (ii) of Lemma \ref{L:mphiestimates}. It was shown in the computation leading up to \eqref{E:QTphiestimate} that
  \begin{align*}
    Q\left(T^{k}\varphi,T^{k}\varphi\right)&\lesssim \left\|X^{k}f\right\|^{2}+\left\|X^{k}Bf\right\|^{2}  
   + \sum_{\ell=0}^{k-1}\left\|T^{\ell}\varphi \right\|_{1}^{2}\\
    &\lesssim \left\|f\right\|_{k,X}^{2}+\left\|X^{k}Bf\right\|^{2}  
  \end{align*}
  holds, where in the second estimate (iii) of Lemma \ref{L:mphiestimates} was used. Applying Lemma \ref{L:Tkphireduction} and then choosing
  $\epsilon>0$ sufficiently small yields the claim.
\end{proof}

\medskip

\section{Remarks}\label{S:Remarks}  
\subsection*{The spaces $H_{X}^{k}(\Omega)$} 

Let $\Omega\subset\mathbb{C}^{n}$ be a smoothly bounded domain, $r$ a defining function for $\Omega$. For $X$ as in \eqref{D:badtangential}, define
\begin{align*}
  H_{X}^{\infty}(\Omega)=\bigcap_{k=0}^{\infty}H_{X}^{k}(\Omega).
\end{align*}
If $\Omega$ satisfies the hypotheses of Theorem \ref{T:MainTheorem}, then the latter and the Sobolev embedding theorem imply that
the Bergman projection maps $H_{X}^{\infty}(\Omega)$ to $C^{\infty}(\overline{\Omega})\cap\mathcal{O}(\Omega)$. That this is an improvement over the result by Kohn and Nirenberg \cite[Theorem 2]{KohNir65}) can be seen by examples similar to the one subsequent to Theorem \ref{T:MainTheorem}: for given $g\in C^{\infty}(\overline{\Omega})$ and $\alpha>-\frac{1}{2}$, set
$$ f(z)=\left(-r(z)\right)^{\alpha}g(z).$$
Then, since $\alpha>-\frac{1}{2}$, $f\in L^{2}(\Omega)$. Moreover, the identity
$$X^{k}f(z)=\left(-r(z)\right)^{\alpha}X^{k}g(z)$$
yields $f\in H_{X}^{k}(\Omega)$ for all $k\in\mathbb{N}$. However, for $\alpha\notin\mathbb{N}_{0}$ and $g$ not identically zero on $b\Omega$, it follows that  $f\notin H^{\ell}(\Omega)$ for $\ell\in\mathbb{N}$ with $\ell\geq\alpha+\frac{1}{2}$. Thus $f\in H_{X}^{\infty}(\Omega)\setminus
C^{\infty}(\overline{\Omega})$ while $Bf\in C^{\infty}(\overline{\Omega})$.

\medskip

Theorem \ref{T:MainTheorem} gives a collection of different estimates since the spaces $H_{X}^{k}(\Omega)$ depend on the choice of $X$ (i.e., on the choice of the defining function). E.g., consider the upper half plane
\begin{align}\label{D:upperhalfplane}
  \mathbb{H}&=\{(x,y)\in\mathbb{R}^{2}:r(x,y)=-y<0\}\\
  &=\{(x,y)\in\mathbb{R}^{2}:\rho(x,y)=-(1+x^{2})y<0\}\notag
\end{align}
and the vector fields $X$ and $\widetilde{X}$, defined by \eqref{D:badtangential} for $r$ and $\rho$ respectively, 
\begin{align*}
  X
  =\frac{1}{2i}\frac{\partial}{\partial x},\qquad
  \widetilde{X}
  =
  \frac{1}{2i}\left(-2xy\frac{\partial}{\partial y}+(1+x^{2})\frac{\partial}{\partial x} \right).
\end{align*}
Let $f(x,y)=y\sin(y^{-3})\chi(x,y)$ where $\chi\in C_{c}^{\infty}(\overline{\mathbb{H}})$ is identically $1$ near the origin. Then $f\in L^{2}(\mathbb{H})$ since $|\sin(y^{-3})|\leq 1$ for $y>0$. Furthermore, it follows from the equality
\begin{align*}
  X^{k}f(x,y)=y\sin(y^{-3})X^{k}\chi(x,y)\qquad\sjump\forall\sjump k\in\mathbb{N}
\end{align*}
that $f\in  H_{X}^{k}(\mathbb{H})$ for all $k\in\mathbb{N}$, and hence $f\in H_{X}^{\infty}(\mathbb{H})$. However, an easy calculation shows that
$\widetilde{X}(\sin(y^{-3}))y\chi(x,y)$ fails to be $L^{2}$-integrable near the origin while $\widetilde{X}(y\chi(x,y))\sin(y^{-3})\in L^{2}(\mathbb{H})$. Thus $f\notin H_{\widetilde{X}}^{1}(\mathbb{H})$.

\subsection*{Estimates for other vector fields}
  The essential property of the vector field $X$ for the proof of Theorem \ref{T:MainTheorem} is
  \begin{align}\label{E:spanning}
    T(b\Omega)=\text{span}\{L_{1},\dots,L_{n},\overline{L}_{1},\dots,\overline{L}_{n},X\}
  \end{align}
  The tangentiality of $X$ ensures the construction of $T$ in Lemma \ref{L:T} as well as the validity of the various integration by parts arguments whereas the spanning property in \eqref{E:spanning} guarantees that all derivatives can be converted to $X$ and barred derivatives when measured in $L^{2}$ -- see Proposition \ref{P:basicLestimate} and Corollary \ref{C:basicSobolev1estimate}.
  
  As only property \eqref{E:spanning} is essential, Theorem \ref{T:MainTheorem} holds also for any tangential vector field $Y$, satisfying \eqref{E:spanning}, in place of $X$, e.g.,
  \begin{align*}
    Y=aX+\sum_{j=1}^{n}\left(b_{j}L_{j}+c_{j}\overline{L}_{j}\right),
  \end{align*}
  where $a, b_{j}, c_{j}\in C^{\infty}(\overline{\Omega})$ and $a(z)\neq 0$ for all $z\in b\Omega$. If the function $a$ vanishes on $b\Omega$, then  
  property \eqref{E:spanning} is violated and in fact both Proposition \ref{P:basicLestimate} and Corollary \ref{C:basicSobolev1estimate} fail to be  
  true. The following example shows that the tangentiality of $Y$ is also essential.
 
\subsubsection*{Example} 
Consider the upper half plane $\mathbb{H}$ as in \eqref{D:upperhalfplane}. For a given constant $\alpha\in(-\frac{1}{2},0)$, let
\begin{align*}
  f(x,y)=|x|^{\alpha}\chi(x,y),
\end{align*}
where $\chi\in C_{c}^{\infty}(\overline{\mathbb{H}})$ is non-negative and satisfies $\chi=1$ on $R=[-1,1]\times[0,1]$. Then both $f$ and its normal derivative, $\frac{\partial}{\partial y}f(x,y)=|x|^{\alpha}\chi_{y}(x,y)$, belong to $L^{2}(\mathbb{H})$. However, $\frac{\partial}{\partial z}Bf$ fails to be square-integrable near the origin. In the following, we give a short sketch on how to prove this.

First recall that
$$(Bf)(z)=-\frac{1}{\pi}\int_{\mathbb{H}}\frac{f(w)}{(z-\bar{w})^{2}}\;dV(w),$$
so that
$$\frac{\partial}{\partial z}(Bf)(z)=\frac{2}{\pi}\int_{\mathbb{H}}\frac{f(w)}{(z-\bar{w})^{3}}\;dV(w).$$
Also,  note that it suffices to show that the imaginary part of $\frac{\partial}{\partial z}Bf$ is not in $L^{2}(D_{\epsilon})$ , where 
$D_{\epsilon}=\{(x,y)\in\mathbb{R}^{2}:y\in(0,\epsilon], 0\leq x\leq\epsilon y\}$, for some positive $\epsilon\leq\frac{1}{2}$. A straightforward computation
shows that
\begin{align*}
  \frac{\partial}{\partial z}(Bf)(z)=\frac{2}{\pi}\int_{R}\frac{|\text{Re}(w)|^{\alpha}}{(z-\bar{w})^{3}}\;dV(w)\;\;\;\text{mod}\;L^{2}(D_{\epsilon}).
\end{align*}
Write $w=u+iv$ and integrate with respect to $v$ to obtain
\begin{align*}
  \frac{\partial}{\partial z}(Bf)(z)&=\frac{1}{i\pi}\int_{-1}^{1}|u|^{\alpha}\left((x-u)+iy \right)^{-2}\;du\;\;\;\text{mod}\;L^{2}(D_{\epsilon})\\
  &=-\frac{i}{\pi}\int_{-1}^{1}|u|^{\alpha}\frac{\left((x-u)-iy \right)^{2}}{\left|\left((x-u)+iy \right)^{2} \right|^{2}}\;du\;\;\;\text{mod}\;L^{2}(D_{\epsilon}).
\end{align*}
Therefore,
\begin{align*}
\text{Im}\left(\frac{\partial}{\partial z}(Bf)(z) \right)&=-\frac{1}{\pi}\int_{-1}^{1}|u|^{\alpha}
\frac{(x-u)^{2}-y^{2}}{\left((x-u)^{2}+y^{2} \right)^{2}}\;du\;\;\;\text{mod}\;L^{2}(D_{\epsilon})\\
&=:\text{I}(z)\;\;\;\text{mod}\;L^{2}(D_{\epsilon}),
\end{align*}
and it remains to be shown that $\text{I}\notin L^{2}(D_{\epsilon})$. It follows from a coordinate change in the $x$-variable that
\begin{align*}
  \int_{D_{\epsilon}} \left(\text{I}(z)\right)^{2}\;dV(z)&=\int_{0}^{\epsilon}\int_{0}^{y\epsilon}\left(\text{I}(x,y)\right)^{2}\;dx\;dy\\
  &=\int_{0}^{\epsilon}\int_{0}^{\epsilon}\left(\text{I}(ty,y) \right)^{2}y\;dt\;dy.
\end{align*}
Another change of variable (in the $u$-coordinate) yields
\begin{align*}
  \text{I}(ty,y)&=\frac{1}{\pi}\int_{-1}^{1}|u|^{\alpha}\frac{y^{2}-(ty-u)^{2}}{\left((ty-u)^{2}+y^{2} \right)}\;du\\
  &=\frac{y^{\alpha-1}}{\pi}\int_{-\frac{1}{y}}^{\frac{1}{y}}|\tau|^{\alpha}\frac{1-(t-\tau)^{2}}{\left((t-\tau)^{2}+1 \right)^{2}}\;d\tau
  =:y^{\alpha-1}\cdot\tilde{\text{I}}(t,y).
\end{align*}
However, some further calculation shows that $\tilde{\text{I}}(t,y)$ is strictly positive if $t$ and $y$ are sufficiently close to 0. Hence, for sufficiently small $\epsilon>0$ there exists a constant $C>0$ such that
\begin{align*}
  \int_{D_{\epsilon}} \left(\text{I}(z)\right)^{2}\;dV(z)\geq C\int_{0}^{\epsilon}\int_{0}^{\epsilon}y^{2\alpha-1}\;dt\;dy,
\end{align*}
which does not exist since $\alpha<0$. Therefore, $\frac{\partial}{\partial z} Bf$ is not square-integrable on $\mathbb{H}$ although 
$f\in H_{\frac{\partial}{\partial y}}^{1}(\mathbb{H})$.

\medskip

\section*{Appendix}

On the ball in $\mathbb{C}^{n}$, the Bergman operator commutes with the vector field $X$ defined by \eqref{D:badtangential}. This fact seems likely to be known, especially to mathematicians working in Lie theory, but the authors were unable to find a statement of this result in the literature. 

Let $D=\bigl\{z\in\mathbb{C}^{n}:r(z)=\sum_{j=1}^{n}|z_{j}|^{2}-1<0\bigr\}$ be the unit ball. The Bergman kernel can be explicitly computed on $D$ \cite{Range86}, 
resulting in the expression
\begin{align*}
 Bf(z)=\frac{n!}{\pi^{n}}\int_{D}\frac{f(w)}{(1-\langle z,w\rangle)^{n+1}}\;dV(w),\qquad f\in L^{2}(D),\;z\in D,
\end{align*}
for the Bergman operator $B$ on $D$. To clarify the (elementary) argument below, subscript the vector field $X$, defined by  \eqref{D:badtangential}, with $z$ or $w$ to indicate whether $X$ acts on the free variables $z$ or the integration variables $w$. 
Since
\begin{align*}
  X_{z}\left(\langle z,w\rangle\right)=\sum_{j=1}^{n}z_{j}\frac{\partial}{\partial z_{j}}\left(\langle z,w\rangle\right)
  =\sum_{j=1}^{n}\bar{w}_{j}\frac{\partial}{\partial\bar{w}_{j}}\left(\langle z,w\rangle \right)
  =\overline{X}_{w}\left(\langle z,w\rangle \right),
\end{align*}
it follows that for $f\in C^{\infty}(\overline{D})$
\begin{align*}
  X_{z}Bf(z)
 &=\frac{n!}{\pi^{n}}\int_{\Omega}f(w)\cdot X_{z}\left(\frac{1}{(1-\langle z,w\rangle)^{n+1}} \right)\;dV(w)\\
 &=\frac{n!}{\pi^{n}}\int_{\Omega}f(w)\cdot \overline{X}_{w}\left(\frac{1}{(1-\langle z,w\rangle)^{n+1}} \right)\;dV(w).
\end{align*}
Since $X$ is self-adjoint, integration by parts then yields
\begin{align*}
  X_{z}Bf(z)=\frac{n!}{\pi^{n}}\int_{D}\frac{X_{w}\left(f(w)\right)}{(1-\langle z,w\rangle)^{n+1}}\;dV(w)=B(Xf)(z).
\end{align*}
Thus, $[B,X]=0$ on $C^{\infty}(\overline{D})$.

\vskip .5cm
\bibliographystyle{plain}
\bibliography{HerMcN10}  
\end{document}